\documentclass[12pt,leqno]{article}
\usepackage{amsmath,amssymb,bbm,array}
\usepackage{amsthm}
\usepackage[all,cmtip]{xy}
\def\bm#1{\mathbbm{#1}}
\def\fn#1{\mathop{{\rm #1}\vphantom{\dim}}}

\def\hs#1{\hspace*{#1ex}}

\def\vds{\hbox{\hbox to 2\arraycolsep{\hss\vbox{\vbox to 1.3ex{ 
\vss\hbox{.}\vspace{.33ex}\hbox{.}\vspace{.33ex}\hbox{.}\vspace{.33ex}\hbox{.}\vspace{.33ex}\hbox{.}\vss}}\hss}}}
\def\hds#1{\hdotsfor[-.1]{#1}}  

\textheight=22cm \textwidth=15.5cm \hoffset=-1cm \voffset=-1.5cm
\makeatletter
\renewcommand{\section}{\@startsection{section}{1}{0mm}{12mm}{5mm}{\raggedright\bf\large}}

\def\@citex[#1]#2{\if@filesw\immediate\write\@auxout{\string\citation{#2}}\fi
  \def\@citea{}\@cite{\@for\@citeb:=#2\do
    {\@citea\def\@citea{\@citesep}\@ifundefined
       {b@\@citeb}{{\bf ?}\@warning
       {Citation `\@citeb' on page \thepage \space undefined}}%
{\csname b@\@citeb\endcsname}}}{#1}}
\def\@citesep{; }
\makeatother

\newtheoremstyle{Kang}{}{}{\itshape}{}{\bf}{}{.5em}{}
\theoremstyle{Kang}
\newtheorem{theorem}{Theorem}[section]
\renewcommand{\thetheorem}{\arabic{section}.\arabic{theorem}}

\newtheoremstyle{Kremark}{}{}{}{}{\bf}{}{.5em}{}
\theoremstyle{Kremark}
\newtheorem{defn}[theorem]{Definition}
\newtheorem*{remark}{Remark.}
\newtheorem{other}{}

\allowdisplaybreaks[1]  
\numberwithin{equation}{section}


\title{THE RATIONALITY PROBLEM FOR FINITE SUBGROUPS OF $GL_4(\bm{Q})$}
\author{Ming-chang Kang$^{a,1}$ and Jian Zhou$^{b,1,2}$ \\[3mm]
\begin{minipage}{16cm} \begin{description} \itemsep=-1pt
\item[] $^{a}$Department of Mathematics and Taida Institute of
Mathematical\\ Sciences, National Taiwan University, Taipei
\item[] $^{b}$School of Mathematical Sciences, Peking University,
Beijing
\end{description} \end{minipage}}
\date{}

\begin{document}

\maketitle

\footnote{\hspace*{-7.5mm}
2010 Mathematics Subject Classification. Primary  13A50, 14E08, 14M20, 12F12. \\
Keywords: rationality problem, rationality, retract rationality, conic bundles.\\
E-mail addresses: kang@math.ntu.edu.tw, zhjn@math.pku.edu.cn.}
\footnote{\hspace*{-6mm}$^1\,$Both authors were partially
supported by National Center for Theoretic Sciences (Taipei
Office). } \footnote{\hspace*{-6mm}$^2\,$The work of this paper
was finished when the second-named author visited National Taiwan
University under the support by National Center for Theoretic
Sciences (Taipei Office).}

\begin{abstract}
{\bf Abstract.} Let $G$ be a finite subgroup of $GL_4(\bm{Q})$.
The group $G$ induces an action on $\bm{Q}(x_1,x_2,x_3,x_4)$, the
rational function field of four variables over $\bm{Q}$. Theorem.
The fixed subfield
$\bm{Q}(x_1,x_2,x_3,x_4)^G:=\{f\in\bm{Q}(x_1,x_2,x_3,x_4):\sigma
\cdot f=f$ for any $\sigma\in G\}$ is rational (i.e.\ purely
transcendental) over $\bm{Q}$, except for two groups which are
images of faithful representations of $C_8$ and $C_3\rtimes C_8$
into $GL_4(\bm{Q})$ (both fixed fields for these two exceptional
cases are not rational over $\bm{Q}$). There are precisely 227
such groups in $GL_4(\bm{Q})$ up to conjugation; the answers to
the rationality problem for most of them were proved by Kitayama
and Yamasaki \cite{KY} except for four cases. We solve these four
cases left unsettled by Kitayama and Yamasaki; thus the whole
problem is solved completely.
\end{abstract}

\section{Introduction}

Let $G$ be a finite subgroup of $GL_n(\bm{Q})$ and
$\bm{Q}(x_1,x_2,\ldots,x_n)$ be the rational function field of $n$
variables over $\bm{Q}$. The group $G$ induces an action on
$\bm{Q}(x_1,\ldots,x_n)$ by $\bm{Q}$-automorphisms defined as
follows: For any $\sigma=(a_{ij})_{1\le i,j \le n}\in
GL_n(\bm{Q})$, for any $1\le j\le n$, define $\sigma\cdot
x_j=\sum_{1\le i\le n} a_{ij}x_i$. In this article we will like to
know whether the fixed subfield $\bm{Q}(x_1,\ldots,x_n)^G:=\{f\in
\bm{Q}(x_1,\ldots,x_n): \sigma\cdot f=f$ for all $\sigma\in G\}$
is rational (i.e.\ purely transcendental) over $\bm{Q}$.

For any $\lambda\in GL_n(\bm{Q})$, any finite subgroup $G$ of
$GL_n(\bm{Q})$, if
$\bm{Q}(x_1,\ldots,x_n)^G=\bm{Q}(f_1,\ldots,f_N)$, then
$\bm{Q}(x_1,\ldots,x_n)^{\lambda\cdot G\cdot
\lambda^{-1}}=\bm{Q}(\lambda(f_1),\ldots,\lambda(f_N))$. Thus the
rationality of $\bm{Q}(x_1,\ldots,x_n)^G$ depends only on the
conjugacy class of $G$ within $GL_n(\bm{Q})$. It is easy to see
that $\bm{Q}(x_2,x_2)^G$ is rational over $\bm{Q}$ for any finite
subgroup $G$ of $GL_2(\bm{Q})$. By a result of Oura and Rikuna
\cite{OR}, $\bm{Q}(x_1,x_2,x_3)^G$ is rational over $\bm{Q}$ for
any finite subgroup $G$ of $GL_3(\bm{Q})$ (also see Theorem A2 in
the appendix of this paper). The goal of this paper is to study
the rationality of $\bm{Q}(x_1,x_2,x_3,x_4)^G$ where $G$ is a
finite subgroup of $GL_4(\bm{Q})$.

There are precisely 227 finite subgroups up to conjugation
contained in $GL_4(\bm{Q})$. A complete list of these subgroups
can be found in the book of Brown, B\"ulow, Neub\"user,
Wondratschek and Zassenhaus \cite[pages 80--260]{BBNWZ}. There
these 227 groups are classified into 33 crystal systems. Each
crystal system contains one or more $\bm{Q}$-classes; each
$\bm{Q}$-class is a conjugacy class of some finite subgroup of
$GL_4(\bm{Q})$. Since every finite subgroup of $GL_n(\bm{Q})$ can
be realized as a finite subgroup of $GL_n(\bm{Z})$, each
$\bm{Q}$-class in \cite{BBNWZ} contains one or more
$\bm{Z}$-classes; these $\bm{Z}$-classes are not conjugate within
$GL_n(\bm{Z})$, but they are conjugate within $GL_n(\bm{Q})$ and
represent this $\bm{Q}$-class. A set of generators of each
$\bm{Z}$-class is exhibited in \cite{BBNWZ}. The notation
$(4,26,1)$ used in \cite{KY} means the first $\bm{Q}$-class in the
26-th crystal system of $GL_4(\bm{Q})$ (see \cite[page
232]{BBNWZ}). Note that these $\bm{Z}$-classes can be found also
in the data base of GAP at the command
``GeneratorsOfGroup(MatGroupZClass(4,33,3,1))" for the first
$\bm{Z}$-class in the $\bm{Q}$-class $(4,33,3)$. We will use the
same notation as in \cite{KY}.

We recall two known results of this question.

\begin{theorem}[Kitayama \cite{Ki}] \label{t1.1}
For $n=4$ or $5$, if $H$ is a finite $2$-group of
$\bm{Q}(x_1,\ldots,x_n)$, then $\bm{Q}(x_1,\ldots,x_n)^H$ is
rational over $\bm{Q}$ if and only if $H$ is not isomorphic to
$C_8$, the cyclic group of order $8$.
\end{theorem}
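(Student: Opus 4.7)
The proof splits into a non-rationality direction and a rationality direction. The non-rationality half is forced by Lenstra's negative solution to Noether's problem for $C_8$ over $\bm{Q}$; the rationality half requires a case-by-case exhaustion of the finite $2$-subgroups of $GL_n(\bm{Z})$ for $n=4,5$ that do not contain a faithful $C_8$-summand.

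\textbf{Non-rationality when $H\cong C_8$.} Up to conjugacy, the faithful embedding $C_8\hookrightarrow GL_4(\bm{Q})$ is essentially unique: it is the rational representation attached to $\bm{Z}[\zeta_8]$, which is the unique faithful $\bm{Q}$-irreducible $C_8$-lattice and has rank $4$. The regular $C_8$-lattice decomposes rationally as $\bm{Z}\oplus\bm{Z}_{-}\oplus\bm{Z}[i]\oplus\bm{Z}[\zeta_8]$, and the first three summands have rational fixed fields. By the no-name lemma, the rationality of the classical Noether problem $\bm{Q}(x_1,\ldots,x_8)^{C_8}$ is stably equivalent to that of $\bm{Q}(x_1,\ldots,x_4)^{C_8}$ for the faithful rank-$4$ action. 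Lenstra's theorem forbids rationality of the former, hence of the latter. For $n=5$, the only way to embed $C_8$ faithfully in $GL_5(\bm{Q})$ (recall that $C_8$ has $\bm{Q}$-irreducible representations of dimensions $1,1,2,4$ only) is as the faithful rank-$4$ summand plus a $1$-dimensional summand; the resulting fixed field is purely transcendental over the rank-$4$ fixed field, so non-rationality descends.

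\textbf{Rationality for the remaining $2$-groups.} For the positive direction, I would enumerate the finite $2$-subgroups of $GL_n(\bm{Z})$ for $n=4,5$ using the tables of \cite{BBNWZ} (or the equivalent GAP database) and attack each conjugacy class with one of three reductions: (i) split off a trivial direct summand to drop the rank by $1$; (ii) apply Galois descent and Hilbert $90$ when $G$ has a normal cyclic subgroup $N$ such that $G/N$ acts with a known-rational fixed field on $\bm{Q}(x_1,\ldots,x_n)^N$; (iii) recognize the action as monomial and invoke Hajja--Kang-type rationality criteria in small rank. Structurally, the organizing principle is the flabby class of the underlying $G$-lattice: among faithful $2$-group lattices of rank $\le 5$, the only non-trivial flabby class is $[\bm{Z}[\zeta_8]]$, so for all other $G$ the fixed field is stably rational, and in such small ranks stable rationality upgrades to rationality by ad hoc low-dimensional arguments.

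\textbf{Main obstacle.} The bottleneck is step (iii) for the non-abelian groups appearing in $GL_4(\bm{Z})$ and $GL_5(\bm{Z})$, specifically $D_4$, $Q_8$, $D_8$, $SD_{16}$ and their extensions, when they act on lattices that are not manifestly permutation lattices. For these one must write down explicit generators of $\bm{Q}(x_1,\ldots,x_n)^G$. I expect the $Q_8$-faithful rank-$4$ action (coming from the rational quaternion representation) to be the most delicate, since its lattice is not a permutation lattice; the invariants here should be buildable either from the reduced norm of a quaternionic linear combination, or by presenting the generic $Q_8$-torsor as a conic bundle over a rational surface and linearizing it using a rational point. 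Once this template is in place, the remaining classes fall out by iterated application of (i)--(iii).
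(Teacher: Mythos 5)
First, a point of reference: the paper you are being compared against does \emph{not} prove this statement --- Theorem \ref{t1.1} is imported verbatim from Kitayama \cite{Ki}, and the only ingredient of it that this paper touches is the $C_8$ non-rationality, which it attributes to Voskresenskii, Lenstra and Saltman via \cite[Theorems 5.1, 5.11, 3.1]{Sa}. Judged on its own terms, your proposal has a genuine gap in each direction. In the negative direction, your key inference is invalid as written: you pass from Lenstra's non-rationality of $\bm{Q}(x_1,\ldots,x_8)^{C_8}$ to non-rationality of the rank-$4$ fixed field on the grounds that the two are ``stably equivalent.'' But rationality is not an invariant of stable equivalence --- there exist stably rational, non-rational fields (Beauville, Colliot-Th\'el\`ene, Sansuc, Swinnerton-Dyer) --- so stable equivalence with a non-rational field proves nothing by itself. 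What is actually needed, and what this paper invokes, is that the $C_8$ Noether field over $\bm{Q}$ fails a \emph{stably birationally invariant} property: it is not retract rational (Saltman), equivalently not stably rational (Endo--Miyata, Lenstra); this is precisely why the paper cites \cite{Sa} with the remark that Theorem 5.1 there applies to $F(V)^G$ for \emph{any} faithful representation $V$. The same defect infects your $n=5$ step: ``non-rationality descends'' along $L\subset L(t)$ is false as stated, since rationality of $L(t)$ does not imply rationality of $L$ (a cancellation problem that fails in general). The repaired argument is: if $\bm{Q}(x_1,\ldots,x_5)^{C_8}$ were rational, then $L$ would be stably rational, contradicting failure of retract rationality --- and note the fifth summand may be the sign character rather than the trivial one, so you need a linearization step in the style of Theorem \ref{t2.1} even to see that the $n=5$ fixed field is $L(t)$.

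In the positive direction your text is a plan, not a proof, and it leans on two unproven pivots. The structural claim that $[\bm{Z}[\zeta_8]]$ is the only nontrivial flabby class among faithful $2$-group lattices of rank $\le 5$ is asserted without argument, and quantifying over all relevant classes in $GL_5(\bm{Z})$ is itself a substantial enumeration, far beyond the $GL_4$ tables of \cite{BBNWZ}. More fundamentally, ``stable rationality upgrades to rationality by ad hoc low-dimensional arguments'' is not a theorem in transcendence degree $4$ or $5$ over $\bm{Q}$; producing those arguments is the entire difficulty. Compare Sections 4--6 of this very paper, where upgrading stably rational fixed fields (e.g.\ for $(4,33,3)$ and $(4,33,6)$) to rational ones requires two successive conic-bundle reductions and heavy explicit computation --- and where the introduction stresses that stable rationality of $(4,33,6)$ is easy while rationality ``is not obvious at all.'' Your own phrasing for the faithful $Q_8$ action (``the invariants \emph{should be} buildable \ldots'') concedes that the bottleneck case is not actually closed. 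So the proposal correctly identifies the landmarks (Lenstra/Saltman on one side, case-by-case reduction on the other) but, as written, neither half constitutes a proof.
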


\begin{theorem}[Kitayama and Yamasaki \cite{KY}] \label{t1.2}
Let $G$ be a finite subgroup of $GL_4(\bm{Q})$.
If $G$ doesn't belong to the $6$ conjugacy classes $(4,26,1)$, $(4,33,2)$, $(4,33,3)$, $(4,33,6)$,
$(4,33,7)$, $(4,33,11)$, then $\bm{Q}(x_1,x_2,x_3,x_4)^G$ is rational over $\bm{Q}$.
If $G$ is conjugate to $(4,26,1)$ or $(4,33,2)$,
then $\bm{Q}(x_1,x_2,x_3,x_4)^G$ is not rational over $\bm{Q}$.
\end{theorem}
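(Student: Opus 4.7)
The plan is to work systematically through the list of 227 finite subgroups of $GL_4(\bm{Q})$ given in \cite{BBNWZ}, organised by the 33 crystal systems, establishing rationality for all but the six explicit classes listed, and then proving non-rationality for the two cases $(4,26,1)$ and $(4,33,2)$.

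For the rationality direction, I would assemble the standard toolbox: the no-name lemma of Endo--Miyata, Hajja--Kang style criteria for monomial actions, and, above all, the result of Oura--Rikuna (Theorem A2) which already disposes of every finite subgroup of $GL_3(\bm{Q})$. Whenever the underlying $4$-dimensional $G$-lattice $M$ splits off a summand $L$ of rank $1$ or $3$ on which $G$ acts through a quotient, one can apply the no-name lemma to split off free variables and reduce to the $GL_3$ case; this handles all reducible representations at once. For the irreducible $4$-dimensional cases I would compute a flabby resolution of $M$; when the flabby class $[M]^{fl}$ is permutation or stably permutation, stable rationality follows, and one then promotes it to rationality by exhibiting an explicit transcendence basis adapted to the group structure (for instance by trivialising a conic bundle over a rational base, in the spirit of the paper's own keyword list).

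For non-rationality, case $(4,26,1)$ is the faithful $4$-dimensional rational representation of $C_8$ (since $\varphi(8)=4$), so Theorem~\ref{t1.1} applies verbatim. Case $(4,33,2)$ is the delicate one: the group strictly contains $C_8$, so one needs a genuinely global cohomological obstruction rather than a restriction argument. The natural tool is the unramified Brauer group of $\bm{Q}(x_1,x_2,x_3,x_4)^G$, which via the Bogomolov/Saltman/Colliot-Th\'el\`ene--Sansuc formalism is a subquotient of $H^2(G,M)$ computable from the lattice $M$ alone; one computes it directly and shows it is non-trivial, ruling out even retract rationality.

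The main obstacle will be twofold. First, the sheer bookkeeping of keeping track of 227 classes across 33 crystal systems without any slipping through; one needs a uniform convention, presumably the GAP-based one used in \cite{KY}. Second, and more essentially, the cohomological computation for $(4,33,2)$: one cannot deduce non-rationality from the presence of a $C_8$ subgroup (non-retract rationality is not inherited along subgroup inclusions in that direction), so the obstruction must be computed for the full lattice. Verifying that the flabby class is non-invertible precisely for the two bad cases, while remaining invertible (or not) in a way that distinguishes them from the four borderline classes $(4,33,3),(4,33,6),(4,33,7),(4,33,11)$ that the present paper will later resolve, is the point where I expect the lattice-theoretic analysis to be most involved.
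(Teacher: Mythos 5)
Your non-rationality argument for $(4,33,2)$ contains the decisive error, and it is exactly the point where you talked yourself out of the correct proof. You assert that since $C_8$ is only a subgroup of $C_3\rtimes C_8$, no restriction-type argument is available and one must compute a global cohomological obstruction for the full lattice. But $C_8$ is not merely a subgroup here: the normal $C_3$ has $C_8$ as a complement, so $C_8$ is a \emph{split quotient} (a retract) of $G$, and Saltman's generic-extension machinery propagates the obstruction along precisely this structure. This is the route the present paper indicates (see the remark after Theorem \ref{t1.4}) and the one underlying \cite{KY}: by \cite[Theorem 5.1]{Sa}, which as the authors note is valid for $F(V)^G$ with $V$ any faithful representation, retract rationality of $\bm{Q}(x_1,\ldots,x_4)^G$ is equivalent to the existence of a generic Galois $G$-extension over $\bm{Q}$; $C_8$ has no generic Galois extension over $\bm{Q}$ (Wang's counterexample to Grunwald's theorem, via Voskresenskii--Lenstra--Saltman), and by the semidirect-product and quotient results \cite[Theorem 3.1, Theorem 5.11]{Sa} the same failure follows for $C_3\rtimes C_8$. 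Worse, the tool you propose instead would return the answer zero: $C_8$ and $C_3\rtimes C_8$ are metacyclic, their Bogomolov-type unramified Brauer invariant vanishes, and indeed over $\bm{C}$ these invariant fields \emph{are} rational. The obstruction is genuinely arithmetic over $\bm{Q}$ (it lives in the Galois action on roots of unity, i.e.\ in retract-rationality/flabby-class territory), and it is not captured by the ``subquotient of $H^2(G,M)$ computable from the lattice alone'' formalism you invoke; note also that the action here is linear, not multiplicative, so the Colliot-Th\'el\`ene--Sansuc toric formalism does not apply directly. Your treatment of $(4,26,1)$ via Theorem \ref{t1.1} is fine, since $\varphi(8)=4$ forces the unique faithful rational representation.

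The rationality half also has a real gap: your plan ``establish stable rationality from the flabby class, then promote to rationality by exhibiting an explicit transcendence basis'' concedes the entire difficulty in one clause. Stably permutation flabby classes give stable rationality only, and there is no general promotion step --- the introduction of this very paper makes the point that $(4,33,3)$ and $(4,33,6)$ are \emph{easily} seen to be stably rational (by Theorem \ref{t2.2}), yet their rationality is precisely what resisted \cite{KY} and occupies Sections 4 and 5 here. The actual proof in \cite{KY} is a case-by-case analysis over the classes of \cite{BBNWZ} using explicit reduction theorems of the kind collected in Section 2 (Theorems \ref{t2.1}--\ref{t2.6}), Kitayama's result for $2$-groups (Theorem \ref{t1.1}), and representation-theoretic identifications as in Theorem \ref{tA1}, not flabby resolutions. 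A smaller overstatement: your no-name reduction ``handles all reducible representations at once'' only when some proper summand carries a faithful action; for, say, a diagonal $C_2\times C_2$ acting by signs, no rank-$1$ or rank-$3$ summand is faithful, and those cases need the monomial-action results (Theorem \ref{t2.6} and its relatives) instead. So as written, one half of your proposal rests on a computation that would come out trivial, and the other half asserts rather than proves its key step.
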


The main result of this paper is to solve the four case $(4,33,3)$, $(4,33,6)$, $(4,33,7)$, $(4,33,11)$ left
unsettled in Theorem \ref{t1.2}.
Since the sets of generators of these four groups in \cite[pages 377--378]{KY} are defined over $\bm{Z}[1/2]$
(see Section 3),
we may consider similar rationality problems for a field $k$ with $\fn{char}k\ne 2$.
Here is our result.

\begin{theorem} \label{t1.3}
{\rm (1)} Let $k$ be a field with $\fn{char}k\ne 2$ and $G$ be a
finite group belong to the conjugacy class $(4,33,3)$ or
$(4,33,6)$, which is defined on \cite[page 377]{KY}. Then both
$k(x_1,x_2,x_3,x_4)^G$ and $k(x_1/x_4,x_2/x_4,x_3/x_4)^G$ are
rational over $k$.

{\rm (2)} Let $k$ be a field with $\fn{char}k\ne 2,3$ and $G$ be a
finite group belong to the conjugacy class $(4,33,7)$ or
$(4,33,11)$ which is defined on \cite[page 378]{KY}. Then both
$k(x_1,x_2,x_3,x_4)^G$ and $k(x_1/x_4,x_2/x_4,x_3/x_4)^G$ are
rational over $k$.
\end{theorem}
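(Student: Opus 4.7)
My plan is a common recipe for all four cases: use an abelian normal subgroup $N \triangleleft G$ to linearise a large part of the fixed field, then analyse the residual $G/N$-action as a conic bundle over a three-dimensional rational base, and finally split the conic by producing an explicit rational point. The rationality of the affine invariant field $k(x_1,x_2,x_3,x_4)^G$ and of its projective counterpart $k(x_1/x_4,x_2/x_4,x_3/x_4)^G$ will then be linked by identifying a $G$-semi-invariant linear form and factoring the tower accordingly.

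Stage 1 (abelianisation). I would begin from the explicit generators of $G$ listed in \cite[pp.~377--378]{KY} for each of $(4,33,3),(4,33,6),(4,33,7),(4,33,11)$ and pick a $k$-rational change of basis — legal because the relevant cyclotomic polynomials split under the assumed characteristic hypotheses — that puts a chosen abelian normal subgroup $N\triangleleft G$ into monomial or diagonal form. Computing $k(x_1,x_2,x_3,x_4)^N$ via standard multiplicative/additive invariant theory together with the no-name lemma should then produce a presentation $k(u_1,u_2,u_3,v)$ on which $G/N$ still acts by an explicit formula, and on which $v$ satisfies a quadratic relation over the subfield $F_0 := k(u_1,u_2,u_3)$; the conic bundle then descends to $F := F_0^{G/N}$.

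Stage 2 (reduction to a conic over a rational base). By the three-variable rationality result (Theorem A2 of the appendix, or its variant for fields of suitable characteristic), $F$ is rational over $k$, so it suffices to show the conic defined over $F$ has an $F$-rational point. Once such a point is produced, Hilbert 90 (or direct parametrisation by lines through the point) supplies the last rationalising variable and yields rationality of $k(x_1,x_2,x_3,x_4)^G$. To handle the projective version $k(x_1/x_4,x_2/x_4,x_3/x_4)^G$, I would either locate a $G$-semi-invariant linear combination $\ell$ of the $x_i$ and use the $G$-equivariant factorisation $k(x_1,\ldots,x_4)=k(x_1/\ell,\ldots,x_3/\ell)(\ell)$, or, in the absence of such an $\ell$, construct a semi-invariant of higher degree and absorb its character via a twisting argument.

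The main obstacle I expect is the final splitting of the conic, i.e.\ writing down the rational point by hand. The conic is cut out by a quadratic relation with explicit but unwieldy coefficients in $F$, and the point has to satisfy a nontrivial algebraic identity. For $(4,33,3)$ and $(4,33,6)$ only $2$ must be invertible, matching the fact that the generators are already defined over $\bm{Z}[1/2]$; for $(4,33,7)$ and $(4,33,11)$ the extra hypothesis $\fn{char}k\neq 3$ should track back to the cyclotomic factor $1+t+t^2$ needing to split — equivalently, the coordinates of the section will carry a denominator of $3$ coming from the order-$3$ element present in those groups. Verifying that the chosen formula indeed lies in $F$ and actually satisfies the conic equation is the tedious case-by-case calculation at the heart of the proof.
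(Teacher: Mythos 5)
Your plan founders on Stage 1: the claim that the characteristic hypotheses let you make a $k$-rational change of basis ``because the relevant cyclotomic polynomials split'' is false, and the whole difficulty of the theorem lives in exactly that gap. The hypothesis $\fn{char}k\ne 2$ (resp.\ $\ne 2,3$) only makes $2$ (resp.\ $2,3$) invertible; it does not put $\sqrt{-1}$, $\sqrt{2}$ or $\sqrt{-3}$ into $k$ --- indeed the motivating case is $k=\bm{Q}$, where none of them is present, and the order-$4$ generator $\lambda_1$ has eigenvalues $\pm\sqrt{-1}$, so it cannot be diagonalized over $k$. The paper is forced to adjoin these irrationalities, writing $k(x_1,\ldots,x_4)^G=k(\sqrt{-1})(x_1,\ldots,x_4)^{\langle G,\pi\rangle}$ with $\pi=\fn{Gal}(k(\sqrt{-1})/k)$ (similarly with $\sqrt2$ for $(4,33,6)$ and $\sqrt{-3}$ for $(4,33,7)$, $(4,33,11)$), and the Galois descent is where almost all the work happens: after the $G$-invariants over the big field are computed, $\pi$ acts simultaneously on the constants and, in the new coordinates, by complicated nonlinear (reciprocal-type) substitutions, and its invariant field is \emph{two successive conic bundles} --- e.g.\ relations $U_1^2+U_2^2=4(1+U_0^2)/(1-3U_0^2)$ over $\bm{P}^1_k$ and then $U_3^2+U_4^2=4B$ over the resulting surface --- not a single conic over a rational threefold as you predict. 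The paper rationalizes these by composition identities for quadratic forms such as $(a^2+b^2)(c^2+d^2)=(ac-bd)^2+(ad+bc)^2$ (and $(3a^2+b^2)(3c^2+d^2)=3(ad+bc)^2+(bd-3ac)^2$ in the $\sqrt{-3}$ cases), ending in a quadric where one coordinate is linear; your instinct that the crux is splitting a conic by an explicit section is morally right, but your plan contains no descent mechanism that would ever produce these conics in the first place.

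The group-theoretic premise of Stage 1 also fails: these groups have no abelian normal subgroup large enough to carry the strategy. For $(4,33,3)$, $G\cong SL_2(\bm{F}_3)$, whose only abelian normal subgroup is the center $C_2$; the natural normal subgroup $\langle\lambda_1,\lambda_2\rangle\cong Q_8$ is nonabelian (though it is already monomial over $k$). The paper instead works generator by generator over the enlarged field: $\lambda_1$ diagonal, $\lambda_2$ handled by Yamasaki's theorem (Theorem 2.4), the order-$3$ generator $\sigma$ by Masuda's formula (Theorem 2.5), and Theorem 2.1 to split off a fixed variable $z_0$ --- which also disposes of your affine/projective linkage without any semi-invariant linear form (none exists over $k$, the representation being irreducible; Theorem 2.1 only needs $g(y_4)/y_4$ to be a degree-zero multiplier in $L$). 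Finally, invoking Theorem A2 for the rationality of the base $F$ is a misapplication: that result concerns \emph{linear} actions of subgroups of $GL_3(\bm{Q})$, whereas the residual actions on the intermediate fields here are monomial and fractional; the paper must use the specific tools of Theorems 2.3--2.6 instead.
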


Combining Theorem \ref{t1.2} and Theorem \ref{t1.3}, we obtain the following result.

\begin{theorem} \label{t1.4}
Let $G$ be a finite subgroup of $GL_4(\bm{Q})$.
Then $\bm{Q}(x_1,x_2,x_3,x_4)^G$ is rational over $\bm{Q}$ if and only if $G$ is not
conjugate to the $(4,26,1)$ or $(4,33,2)$.
\end{theorem}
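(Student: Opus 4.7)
The plan is to deduce Theorem \ref{t1.4} directly by combining Theorem \ref{t1.2} and Theorem \ref{t1.3}. Since rationality of $\bm{Q}(x_1,x_2,x_3,x_4)^G$ depends only on the $GL_4(\bm{Q})$-conjugacy class of $G$, it suffices to go through the 227 conjugacy classes listed in \cite{BBNWZ}; Theorem \ref{t1.2} already settles all but six of them, and the remaining six are exactly those I need to parcel out between the two halves of the stated dichotomy.

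First I would partition the six exceptional classes flagged in Theorem \ref{t1.2} into two subsets: the two classes $(4,26,1)$ and $(4,33,2)$, for which Theorem \ref{t1.2} asserts that $\bm{Q}(x_1,x_2,x_3,x_4)^G$ is \emph{not} rational over $\bm{Q}$, and the four classes $(4,33,3)$, $(4,33,6)$, $(4,33,7)$, $(4,33,11)$, which were left unresolved there. For the first subset nothing more is needed. For the second subset, I would specialize Theorem \ref{t1.3} to $k=\bm{Q}$; this specialization is legitimate because $\fn{char}\bm{Q}=0$ is different from both $2$ and $3$, so parts (1) and (2) of Theorem \ref{t1.3} apply and yield that $\bm{Q}(x_1,x_2,x_3,x_4)^G$ is rational over $\bm{Q}$ in each of these four conjugacy classes.

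For any remaining finite subgroup $G$ of $GL_4(\bm{Q})$ — that is, one not conjugate to any group in the six exceptional classes — Theorem \ref{t1.2} gives rationality directly. Collecting the three cases, $\bm{Q}(x_1,x_2,x_3,x_4)^G$ is rational over $\bm{Q}$ precisely when $G$ is not conjugate to $(4,26,1)$ or $(4,33,2)$, which is the statement of Theorem \ref{t1.4}.

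I do not anticipate any real obstacle in this deduction; the entire substance lies in Theorems \ref{t1.2} and \ref{t1.3}, and what remains is a purely bookkeeping step. The only point requiring a little care is to verify that the four classes handled in Theorem \ref{t1.3} are exactly the four classes left open by Theorem \ref{t1.2}, which is immediate by cross-checking the labels $(4,33,3)$, $(4,33,6)$, $(4,33,7)$, $(4,33,11)$ against the list of six exceptional classes in Theorem \ref{t1.2}.
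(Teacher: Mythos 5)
Your proposal is correct and matches the paper's own (one-line) derivation: Theorem \ref{t1.4} is obtained exactly by combining Theorem \ref{t1.2} for the $221$ settled classes and the two non-rational classes $(4,26,1)$, $(4,33,2)$, with Theorem \ref{t1.3} specialized to $k=\bm{Q}$ for the four remaining classes $(4,33,3)$, $(4,33,6)$, $(4,33,7)$, $(4,33,11)$. Your bookkeeping, including the characteristic check $\fn{char}\bm{Q}=0\ne 2,3$, is precisely what the paper's deduction implicitly relies on.
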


Note that the groups $(4,26,1)$ and $(4,33,2)$ are images of
faithful representations of $C_8$ and $C_3\rtimes C_8$ into
$GL_4(\bm{Q})$ respectively. The fixed fields for these two groups
are not rational over $\bm{Q}$ by Voskresenskii, Lenstra and
Saltman by \cite[Theorem 5.1, Theorem 5.11, Theorem 3.1]{Sa},
since Theorem 5.1 of \cite{Sa} is valid for $F(V)^G$ where $G\to
GL(V)$ is any faithful representation of $G$.

The main idea of the proof of Theorem \ref{t1.3} is to enlarge the
field $k$ to $K$ by adding $\sqrt{-1}$, $\sqrt{2}$ or $\sqrt{-3}$
to $k$ so that the given representations of these groups will
become more simple. Then consider the fixed subfield of ``the
projective part", i.e.\ $K(y_1/y_4,y_2/y_4,y_3/y_4)^G$, with the
aid of Theorem \ref{t2.1} in Section 2. Since we have enlarged $k$
to $K$, we should descend the ground field from $K$ to $k$, which
leads to a field of the form of two successive conic bundles. The
method to show the rationality of the resulting field is
elementary, but technical. Moreover, in the proof of Theorem
\ref{t1.3}, lots of computations are necessary. Some of them can
be carried out by hands as most proof of the ``traditional"
mathematical theorems. But we use the computer algebra package in
some other computations for the sake of saving energy and keeping
free from possible ``manual" mistakes. We emphasize that our use
of the computer in this paper is limited only to the routine
symbolic computation; no extra codes of data bases, e.g. GAP, are
required.

We will note that the rationality of $k(x_1,x_2,x_3,x_4)^G$
depends on the group $G$ as an abstract group and also depends on
the faithful representation of $G$ into $GL_4(\bm{Q})$. For
example, both the groups $(4,32,11)$ and $(4,33,6)$ are isomorphic
to $GL_2(\bm{F}_3)$ as abstract groups (see \cite[page 377]{KY}).
The fixed field $k(x_1,x_2,x_3,x_4)^G$ is rational for the group
$(4,32,11)$ by \cite{Pl,Ri}. Let $k(x'_1,x'_2,x'_3,x'_4)$ be the
field with the action of $G$ associated to $(4,33,6)$. By applying
Theorem \ref{t2.2} in Section 2 to
$k(x_1,x_2,x_3,x_4)(x'_1,x'_2,x'_3,x'_4)$ and
$k(x'_1,x'_2,x'_3,x'_4)(x_1,x_2,x_3,x_4)$, we find easily that
$k(x'_1,x'_2,x'_3,x'_4)^G(t_1,t_2,t_3,t_4)$ is rational over $k$
(where $g(t_i)=t_i$ for any $g\in G$, any $1\le i\le 4$). Thus
$k(x'_1,x'_2,x'_3,x'_4)^G$ is stably rational over $k$. But it is
not obvious at all whether $k(x'_1,x'_2,x'_3,x'_4)^G$ is rational
over $k$. The same situation holds for the groups $(4,32,5)$ and
$(4,33,3)$.

We will organize this paper as follows. We recall some
preliminaries in Section 2, which will be used in the proof of
Theorem \ref{t1.3}. In Section 3, we give the sets of generators
of the four groups $(4,33,3)$, $(4,33,6)$, $(4,33,7)$ and
$(4,33,11)$. The rationality for the groups $(4,33,3)$ and
$(4,33,6)$ will be given in Section 4 and Section 5 respectively.
The rationality of the groups $(4,33,7)$ and $(4,33,11)$ is given
in Section 6. Thus the proof of Theorem 1.3 is finished. Since the
rationality of $(4,31,i)$ where $3\le i\le 7$ in \cite[page 368,
lines 6--8 from the bottom]{KY} was referred to an unpublished
preprint of Yamasaki \cite{Ya1}, we include a proof of these cases
in the appendix for the convenience of the reader. For the same
reason we include a proof of Oura-Rikuna's Theorem, i.e. the
rationality for finite subgroups in $GL_3(\bm{Q})$ in the
appendix, because Oura and Rikuna's paper \cite{OR} hasn't been
published when this paper was written (although their preprint
appeared already in 2003). Since we don't have the preprints
\cite{OR} and \cite{Ya1}, we are not sure whether the proof in
Theorem A1 and Theorem A2 are the same as theirs.

Notation and terminology. The cyclic group of order $n$ will be
denoted by $C_n$. If $G$ is a finite subgroup of $GL_n(\bm{Z})$,
for any field $k$, we will say that $G$ acts on the rational
function field $k(x_1,\ldots,x_n)$ by monomial $k$-automorphisms,
if for any $\sigma=(a_{ij})_{1\le i,j\le n} \in G\subset
GL_n(\bm{Z})$, we define $\sigma\cdot
x_j=b_j(\sigma)\cdot\prod_{1\le i\le n} x_i^{a_{ij}}$ for some
$b_j(\sigma) \in k\backslash \{0\}$.

\section{Preliminaries}

We recall several results which will be used in tackling the rationality problem.

\begin{theorem}[Ahmad, Hajja and Kang {\cite[Theorem 3.1]{AHK}}] \label{t2.1}
Let $L$ be any field,
$L(x)$ the rational function field of one variable over $L$ and $G$ a finite group acting on $L(x)$.
Suppose that, for any $\sigma \in G$, $\sigma(L)\subset L$ and $\sigma(x)=a_\sigma\cdot x+b_\sigma$
where $a_\sigma,b_\sigma\in L$ and $a_\sigma\ne 0$.
Then $L(x)^G=L^G(f)$ for some polynomial $f\in L[x]$.
In fact, if $m=\min\{\deg g(x)\ge 1: g(x)\in L[x]^G\}$,
any polynomial $f\in L[x]^G$ with $\deg f=m$ satisfies the property $L(x)^G=L^G(f)$.
\end{theorem}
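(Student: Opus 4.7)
The plan is to descend the question to the polynomial ring $L[x]$ and exploit the fact that the hypothesis forces $G$ to act by degree-preserving ring automorphisms. Because each $\sigma\in G$ satisfies $\sigma(L)\subset L$ and $\sigma(x)=a_\sigma x+b_\sigma$ with $a_\sigma\neq0$, the subring $L[x]$ is $G$-stable and $\sigma$ preserves the degree of every polynomial in $x$. Existence of a positive-degree invariant is witnessed by the orbit product $\prod_{\sigma\in G}\sigma(x)$, which has degree $|G|\ge 1$ and is $G$-invariant because left multiplication permutes the factors. Thus the minimum $m$ in the statement is well-defined, and I fix any $f\in L[x]^G$ of degree $m$.

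The core claim is that $L[x]^G=L^G[f]$. I would prove this by strong induction on $n=\deg g$ for $g\in L[x]^G$. The base case $n<m$ reduces, by minimality of $m$, to $n=0$, hence $g\in L\cap L[x]^G=L^G$. For $n\ge m$, perform the polynomial division $g=fq+r$ in $L[x]$ (possible because the leading coefficient of $f$ is a unit of $L$), with $q,r\in L[x]$ and $\deg r<m$. Applying any $\sigma\in G$ and using $\sigma(g)=g$ and $\sigma(f)=f$ gives $g=f\,\sigma(q)+\sigma(r)$ with $\deg\sigma(r)=\deg r<m$; uniqueness of polynomial division then forces $\sigma(q)=q$ and $\sigma(r)=r$, so $q,r\in L[x]^G$. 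Minimality of $m$ applied to $r$ forces $r\in L^G$, and the inductive hypothesis on $q$ (of degree $n-m<n$) yields $q\in L^G[f]$; whence $g=fq+r\in L^G[f]$.

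To pass from the ring statement to the field statement, given any $\varphi\in L(x)^G$ I write $\varphi=p/q$ with $p,q\in L[x]$ and form the orbit product $q^*:=\prod_{\sigma\in G}\sigma(q)\in L[x]^G$. Then $p^*:=\varphi\cdot q^*=p\cdot\prod_{\sigma\ne 1}\sigma(q)$ lies in $L[x]$ and is $G$-invariant (as the product of the invariants $\varphi$ and $q^*$). Hence $\varphi=p^*/q^*$ is a ratio of elements of $L[x]^G=L^G[f]$, and therefore $\varphi\in L^G(f)$. Combined with the obvious inclusion $L^G(f)\subseteq L(x)^G$, this yields $L(x)^G=L^G(f)$.

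The main technical point to be careful with is the invocation of uniqueness of polynomial division in the inductive step: one must use that $f$ has a unit leading coefficient in the base field $L$ (so the division algorithm works) and that the quotient–remainder decomposition with $\deg r<m$ is unique (so that the $G$-invariance of the input $g$ propagates to both pieces simultaneously). Once this observation is in hand, the argument requires no cohomological input and immediately accommodates any polynomial $f\in L[x]^G$ of minimal positive degree, giving the ``in fact'' clause of the theorem.
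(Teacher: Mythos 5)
Your proof is correct. Note that the paper itself contains no proof of this statement --- Theorem 2.1 is quoted as a preliminary, with the proof deferred entirely to Ahmad, Hajja and Kang \cite[Theorem 3.1]{AHK} --- so there is no in-paper argument for you to diverge from; what you give is a sound, self-contained version of the standard argument for that cited result. All three load-bearing steps check out: since $a_\sigma\ne 0$, each $\sigma$ stabilizes $L[x]$ and preserves degrees, so the orbit product $\prod_{\sigma\in G}\sigma(x)$ (degree $|G|\ge 1$) shows $m$ is well defined; uniqueness of quotient and remainder upon division by $f$ (whose leading coefficient is invertible because $L$ is a field), together with $\deg\sigma(r)=\deg r<m$, forces $q,r\in L[x]^G$ and hence $L[x]^G=L^G[f]$ by induction on degree; and the orbit product $\prod_{\sigma\in G}\sigma(q)$ clears denominators so that every element of $L(x)^G$ is a ratio of elements of $L[x]^G$, giving $L(x)^G=L^G(f)$ including the ``in fact'' clause.
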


\begin{theorem}[Hajja and Kang {\cite[Theorem 1]{HK}}] \label{t2.2}
Let $G$ be a finite group acting on $L(x_1,\ldots,x_n)$,
the rational function field of $n$ variables over a field $L$.
Suppose that
\begin{enumerate}
\item[{\rm (i)}] for any $\sigma\in G$, $\sigma(L)\subset L$;
\item[{\rm (ii)}] the restriction of the action of $G$ to $L$ is faithful;
\item[{\rm (iii)}] for any $\sigma\in G$,
\[
\begin{pmatrix} \sigma(x_1) \\ \sigma(x_2) \\ \vdots \\ \sigma(x_n) \end{pmatrix}
=A(\sigma)\cdot \begin{pmatrix} x_1 \\ x_2 \\ \vdots \\ x_n \end{pmatrix}+B(\sigma)
\]
where $A(\sigma)\in GL_n(L)$ and $B(\sigma)$ is a $n\times 1$ matrix over $L$.
\end{enumerate}
\end{theorem}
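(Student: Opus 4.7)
The plan is to linearize the affine $G$-action on $L[x_1,\ldots,x_n]$ and then invoke Speiser's form of Hilbert~90. Consider the $(n+1)$-dimensional $L$-subspace
$$W := L\cdot 1 \oplus L x_1 \oplus \cdots \oplus L x_n \subset L[x_1,\ldots,x_n]$$
of polynomials of total degree at most one. Hypothesis (iii) says precisely that $\sigma(x_j)\in W$ for every $\sigma\in G$ and every $j$, so together with (i) the resulting action of $G$ on $W$ is semilinear with respect to the action of $G$ on $L$.

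By (ii) and Artin's theorem, $L/L^G$ is a finite Galois extension with group $G$ and $[L:L^G]=|G|$. Speiser's lemma then yields that the natural map $W^G\otimes_{L^G}L\to W$ is an $L$-linear isomorphism, so $\dim_{L^G}W^G=n+1$. Since the constant $1\in W^G$, we may pick an $L^G$-basis of the form $\{1,w_1,\ldots,w_n\}$; this is simultaneously an $L$-basis of $W$. The linear parts of $w_1,\ldots,w_n$ therefore form an $L$-basis of $Lx_1\oplus\cdots\oplus Lx_n$, which forces $w_1,\ldots,w_n$ to be algebraically independent over $L$ and yields $L[x_1,\ldots,x_n]=L[w_1,\ldots,w_n]$.

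Hence $L(x_1,\ldots,x_n)=L(w_1,\ldots,w_n)$, and $L^G(w_1,\ldots,w_n)\subseteq L(x_1,\ldots,x_n)^G$. A degree count using
$$[L(x_1,\ldots,x_n):L^G(w_1,\ldots,w_n)]\le [L:L^G]=|G|=[L(x_1,\ldots,x_n):L(x_1,\ldots,x_n)^G]$$
then forces $L(x_1,\ldots,x_n)^G=L^G(w_1,\ldots,w_n)$, which is purely transcendental of transcendence degree $n$ over $L^G$. The only non-formal ingredient is Speiser's lemma; the one thing to check before invoking it is that the data $A(\sigma),B(\sigma)$ actually assemble into a semilinear $G$-module structure on $W$, i.e.\ that they satisfy the cocycle identities $A(\sigma\tau)=A(\sigma)\cdot\sigma(A(\tau))$ and $B(\sigma\tau)=B(\sigma)+A(\sigma)\cdot\sigma(B(\tau))$. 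This is automatic from the standing assumption that the formulas in (iii) yield an honest group action of $G$ on $L(x_1,\ldots,x_n)$, so no real obstacle arises.
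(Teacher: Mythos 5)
Your proof is correct and is essentially the classical argument: the paper itself offers no proof of Theorem 2.2 (it is quoted from Hajja and Kang \cite[Theorem 1]{HK}), and the argument there is precisely this Galois-descent one --- linearize on the space $W$ of polynomials of degree at most one, use hypothesis (ii) together with Artin's theorem to see that $L/L^G$ is Galois with group $G$, and apply Speiser's lemma (triviality of semilinear descent, equivalently $H^1(G,GL_{n+1}(L))=1$) to produce an invariant basis $\{1,w_1,\ldots,w_n\}$. Note that your closing degree count actually yields the sharper statement $L(x_1,\ldots,x_n)^G=L^G(w_1,\ldots,w_n)$, which is the form in which the theorem is applied throughout the paper and which immediately implies the stated conclusion with $z_i=w_i$.
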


Then there exist elements $z_1,\ldots,z_n\in L(x_1,\ldots,x_n)$ which are algebraically independent over $L$,
and $L(x_1,\ldots,x_n)=L(z_1,\ldots,z_n)$ so that $\sigma(z_i)=z_i$ for any $\sigma \in G$,
any $1\le i\le n$.

\begin{theorem}[Kang {\cite[Theorem 2.4]{Ka}}] \label{t2.3}
Let $k$ be any field, $\sigma$ be a $k$-automorphism of the rational function field $k(x,y)$ defined by
$\sigma(x)=a/x$, $\sigma(y)=b/y$ where $a\in k\backslash\{0\}$,
$b=c[x+(a/x)]+d$ such that $c,d \in k$ and at least one of $c$ and $d$ is non-zero.
Then $k(x,y)^{\langle\sigma\rangle}=k(u,v)$ where $u$ and $v$ are defined as
\[
u=\frac{x-(a/x)}{xy-(ab/xy)}\,,~~ v = \frac{y-(b/y)}{xy-(ab/xy)}\,.
\]
\end{theorem}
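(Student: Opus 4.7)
The plan is to prove the theorem in three stages: verify that $u,v$ are $\sigma$-invariant; identify $k(x,y)^{\langle\sigma\rangle}$ with an explicit $k(s,t,r)$ by enlarging $\langle\sigma\rangle$ to a Klein four-group; and express $s,t,r$ as rational functions of $u,v$.

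First, since $b=c(x+a/x)+d$ and $\sigma$ fixes $x+a/x$, one has $\sigma(b)=b$, whence $\sigma^2=\fn{id}$ and $[k(x,y):k(x,y)^{\langle\sigma\rangle}]=2$. Setting $X=x-a/x$, $Y=y-b/y$, $W=xy-ab/(xy)$, direct computation gives $\sigma(X)=-X$, $\sigma(Y)=-Y$, $\sigma(W)=-W$, so $u=X/W$ and $v=Y/W$ lie in $k(x,y)^{\langle\sigma\rangle}$. Second, because $b\in k(s)$ for $s:=x+a/x$, the formulas $\tau_1(x)=a/x,\tau_1(y)=y$ and $\tau_2(x)=x,\tau_2(y)=b/y$ define commuting involutions of $k(x,y)$ with $\sigma=\tau_1\tau_2$, generating $G\cong(\bm{Z}/2)^2$. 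A tower argument yields $k(x,y)^G=k(s,t)$ with $t:=y+b/y$. The element $r:=xy+ab/(xy)$ is $\sigma$-invariant but swapped by $\tau_1$ with $q:=ay/x+bx/y$ (and $r+q=st$), so $r\notin k(s,t)$; since $[k(x,y)^{\langle\sigma\rangle}:k(s,t)]=2$, it follows that $k(x,y)^{\langle\sigma\rangle}=k(s,t,r)$.

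The third stage is the heart of the argument. Using $2x=s+uW$, $2y=t+vW$, and $2xy=r+W$, one expands $(s+uW)(t+vW)=4xy=2r+2W$ and applies $W^2=r^2-4ab$ to obtain
\[
st+(sv+tu)W+uv(r^2-4ab)=2r+2W.
\]
Separating $\sigma$-invariant and anti-invariant ($W$-linear) parts gives the two identities
\[
st+uv(r^2-4ab)=2r,\qquad sv+tu=2.
\]
Combined with $u^2(r^2-4ab)=s^2-4a$ and $v^2(r^2-4ab)=t^2-4b$ (which follow from $X^2=s^2-4a$, $Y^2=t^2-4b$, $W^2=r^2-4ab$), one derives $(vs-ut)(vs+ut)=4(av^2-bu^2)$ and then $vs=1+av^2-bu^2$. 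Substituting $b=cs+d$ and solving,
\[
s=\frac{1+av^2-du^2}{v+cu^2},\qquad t=\frac{2-sv}{u},\qquad r=\frac{s-2av}{u},
\]
all lying in $k(u,v)$. Hence $k(s,t,r)\subseteq k(u,v)\subseteq k(x,y)^{\langle\sigma\rangle}=k(s,t,r)$, and equality follows.

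The main obstacle is extracting the linear identity $sv+tu=2$; once the symmetric/antisymmetric splitting in $W$ is performed, it emerges automatically, but this linearity is precisely what converts the \emph{a priori} quadratic relation $u^2(r^2-4ab)=s^2-4a$ into a rational expression for $s$. The special form $b=cs+d$ is used only at the very last step, to rewrite $vs=1+av^2-bu^2$ as the explicit rational function $s=(1+av^2-du^2)/(v+cu^2)$; the denominator $v+cu^2$ is nonzero because $u,v$ are algebraically independent over $k$, as one checks from the explicit formulas $u=y(x^2-a)/(x^2y^2-ab)$, $v=x(y^2-b)/(x^2y^2-ab)$.
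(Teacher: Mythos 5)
This statement is one of the quoted preliminaries: the paper gives no proof of Theorem \ref{t2.3} (it is cited from \cite[Theorem 2.4]{Ka}), so your argument must stand on its own. For $\fn{char}k\ne 2$ it does: I checked that $\sigma(b)=b$ and $\sigma^2=\fn{id}$; that the tower argument gives $k(x,y)^{\langle\tau_1,\tau_2\rangle}=k(s,t)$ and, since $\tau_1(r)=q=st-r\ne r$ while $[k(x,y)^{\langle\sigma\rangle}:k(s,t)]=2$, that $k(x,y)^{\langle\sigma\rangle}=k(s,t,r)$; and that all your identities are correct polynomial identities: $sv+tu=2$, $st+uv(r^2-4ab)=2r$, $u^2(r^2-4ab)=s^2-4a$, $v^2(r^2-4ab)=t^2-4b$, and $ru=s-2av$ (this last follows from $sW-rX=2aY$). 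From these your formulas $s=(1+av^2-du^2)/(v+cu^2)$, $t=(2-sv)/u$, $r=(s-2av)/u$ do place $s,t,r$ in $k(u,v)$ and close the sandwich. One small correction to your closing remark: the hypothesis $b=cs+d$ is not used ``only at the very last step'' --- the membership $b\in k(s)$ is what makes $\tau_1$ an involution commuting with $\tau_2$ in your second stage; only the \emph{linearity} in $s$ is reserved for the end.

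Two genuine defects remain. First, the theorem is asserted for \emph{any} field, and your third stage collapses in characteristic $2$: there $X=s$, $Y=t$, $W=r$, so $2x=s+uW$ and both extracted identities read $0=0$, the invariant/anti-invariant splitting does not exist ($W$ is itself invariant), and the pivotal division by $vs+ut=2$ is impossible. The result is still true there, and your characteristic-free stages 1--2 make the repair short: in characteristic $2$ one has $u=s/r$, $v=t/r$, and $r$ satisfies the characteristic-free quadratic $r^2-st\,r+(at^2+bs^2-4ab)=0$ (from $r+q=st$, $rq=at^2+bs^2-4ab$); dividing by $r^2$ and substituting $b=cur+d$ yields $r=(1+av^2+du^2)/\bigl(u(v+cu^2)\bigr)$, whence $s=ur,\ t=vr\in k(u,v)$. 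Second, you justify $v+cu^2\ne 0$ by the algebraic independence of $u,v$, which you never prove and which is circular as stated (independence is immediate only after $k(u,v)=k(x,y)^{\langle\sigma\rangle}$ is known). The honest fix is direct: clearing denominators, $v+cu^2=\bigl[x(y^2-b)(x^2y^2-ab)+cy^2(x^2-a)^2\bigr]/(x^2y^2-ab)^2$, and the bracketed numerator, viewed as a polynomial in $y$ over $k(x)$, has leading term $x^3y^4\ne 0$; the same kind of check gives $u\ne 0$, which your formulas for $t$ and $r$ also silently require. Since the present paper invokes Theorem \ref{t2.3} only with $\fn{char}k\ne 2,3$ (Step 7 of Section 6), your proof covers every application made here, but as a proof of the stated theorem it needs the characteristic-$2$ case and the nonvanishing argument supplied.
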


\begin{theorem}[Yamasaki \cite{Ya2}] \label{t2.4}
Let $k$ be a field with $\fn{char}k\ne 2$, $a\in k\backslash\{0\}$,
$\sigma$ be a $k$-automorphism of the rational function field $k(x,y)$ defined by $\sigma(x)=a/x$, $\sigma(y)=a/y$.
Then $k(x,y)^{\langle \sigma\rangle}=k(u,v)$ where $u=(x-y)/(a-xy)$, $v=(x+y)/(a+xy)$.
\end{theorem}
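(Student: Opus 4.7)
My plan is a direct one. Set $G=\langle\sigma\rangle$. Since $\sigma(x)=a/x\ne x$ and $\sigma^{2}=\fn{id}$, the group $G$ acts faithfully on $k(x,y)$ with order $2$, so Galois theory gives $[k(x,y):k(x,y)^{G}]=2$. I will verify directly that $u,v\in k(x,y)^{G}$ and then show that $[k(x,y):k(u,v)]\le 2$; combined with the inclusion $k(u,v)\subseteq k(x,y)^{G}$, this will force $k(u,v)=k(x,y)^{G}$.

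For the invariance, a short computation suffices: $\sigma(x-y)=a/x-a/y=-a(x-y)/(xy)$ and $\sigma(a-xy)=-a(a-xy)/(xy)$, so the common factor $-a/(xy)$ cancels in the ratio $u=(x-y)/(a-xy)$ and $\sigma(u)=u$; analogously $\sigma(x+y)=a(x+y)/(xy)$ and $\sigma(a+xy)=a(a+xy)/(xy)$, so $\sigma(v)=v$.

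For the degree bound I eliminate $y$. Solving $u(a-xy)=x-y$ for $y$ yields $y=(x-au)/(1-ux)$, and solving $v(a+xy)=x+y$ yields $y=(av-x)/(1-vx)$. Equating these two expressions and clearing denominators gives the identity
\[
(u+v)\,x^{2}-2(1+auv)\,x+a(u+v)=0
\]
in $k[u,v,x]$. To conclude that this is a genuine quadratic in $x$ over $k(u,v)$ one checks that $u+v\ne 0$ in $k(x,y)$; a direct calculation gives $u+v=2x(a-y^{2})/(a^{2}-x^{2}y^{2})$, and the numerator is nonzero precisely because $\fn{char}k\ne 2$ (in characteristic $2$ one would have $u=v$, so the hypothesis is essential). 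The same calculation, or the nonvanishing of the Jacobian $\partial(u,v)/\partial(x,y)=2(a-x^{2})(a-y^{2})/[(a-xy)^{2}(a+xy)^{2}]$, shows that $u$ and $v$ are algebraically independent over $k$. Hence $x$ is algebraic of degree at most $2$ over $k(u,v)$, and since $y\in k(u,v)(x)$ by the explicit formula above, $[k(x,y):k(u,v)]\le 2$, which completes the proof.

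The main obstacle is the elimination step itself, though it is just a short symbolic manipulation. One could alternatively avoid verifying algebraic independence by first invoking Theorem \ref{t2.3} with $c=0$ and $d=a$, which already tells us that $k(x,y)^{G}$ is a rational function field in two variables over $k$, and then simply matching the two generating sets via the degree bound above.
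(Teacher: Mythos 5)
Your proof is correct. Note first that the paper offers no proof of Theorem \ref{t2.4} at all: it is quoted as a preliminary from Yamasaki \cite{Ya2}, so there is no in-paper argument to compare against; your write-up is a self-contained verification of a black-box citation, which is valuable in itself. Your route --- Artin's theorem giving $[k(x,y):k(x,y)^{\langle\sigma\rangle}]=2$, direct invariance of $u,v$, elimination of $y$ to get the quadratic $(u+v)x^2-2(1+auv)x+a(u+v)=0$, and the degree count $k(u,v)\subseteq k(x,y)^{\langle\sigma\rangle}\subseteq k(x,y)$ with $[k(x,y):k(u,v)]\le 2$ --- is the standard way to certify such explicit generators, and all your computations check out (including $u+v=2x(a-y^2)/(a^2-x^2y^2)\ne 0$ and the Jacobian $2(a-x^2)(a-y^2)/[(a-xy)^2(a+xy)^2]$). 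Three cosmetic remarks. First, your phrase ``the identity \dots in $k[u,v,x]$'' should be read as a relation among the elements $u,v,x$ of $k(x,y)$, not as a polynomial identity in three independent indeterminates (in the latter reading it would be false, and indeed it is exactly the algebraic relation tying $x$ to $u,v$). Second, to conclude $y\in k(u,v)(x)$ from $y=(x-au)/(1-ux)$ you should note $1-ux\ne 0$, which is immediate since $ux=1$ would force $x^2=a$, contradicting transcendence of $x$. Third, the Jacobian computation is logically redundant: once $[k(x,y):k(u,v)]\le 2$ and $k(u,v)\subseteq k(x,y)^{\langle\sigma\rangle}$ force $k(u,v)=k(x,y)^{\langle\sigma\rangle}$, the field $k(u,v)$ has transcendence degree $2$ over $k$, so $u,v$ are automatically algebraically independent. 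Your closing observation is also apt: Theorem \ref{t2.3} with $c=0$, $d=a$ already covers this action (with different generators), so Theorem \ref{t2.4} should be viewed as supplying the more convenient pair $u,v$ used later in the paper.
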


\begin{theorem}[Masuda {\cite[Theorem 3; HoK2, Theorem 2.2]{Ma}}] \label{t2.5}
Let $k$ be any field, $\sigma$ be a $k$-automorphism of the
rational function field $k(x,y,z)$ defined by $\sigma: x\mapsto
y\mapsto z\mapsto x$. Then
$k(x,y,z)^{\langle\sigma\rangle}=k(s_1,u,v)=k(s_3,u,v)$ where
$s_1$, $s_2$, $s_3$ are the elementary symmetric functions of
degree one, two, three in $x$, $y$, $z$, and $u$ and $v$ are
defined as
\begin{align*}
u &= \frac{x^2y+y^2z+z^2x-3xyz}{x^2+y^2+z^2-xy-yz-zx}, \\
v &= \frac{xy^2+yz^2+zx^2-3xyz}{x^2+y^2+z^2-xy-yz-zx}.
\end{align*}
\end{theorem}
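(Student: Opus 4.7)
My plan is to exhibit explicit polynomial formulas expressing $s_2$ and $s_3$ in terms of $s_1$, $u$, $v$, from which both of the claimed field equalities fall out. First I would record the setup: since $k(x,y,z)^{S_3}=k(s_1,s_2,s_3)$ and $\langle\sigma\rangle$ has index $2$ in $S_3$, the extension $k(x,y,z)^{\langle\sigma\rangle}/k(s_1,s_2,s_3)$ has degree $2$. The two $\langle\sigma\rangle$-invariants $p:=x^2y+y^2z+z^2x$ and $q:=xy^2+yz^2+zx^2$ are swapped by any transposition, so either of them generates this quadratic extension. Writing $r:=x^2+y^2+z^2-xy-yz-zx=s_1^2-3s_2$, which is $S_3$-invariant, the elements $u=(p-3s_3)/r$ and $v=(q-3s_3)/r$ are manifestly $\langle\sigma\rangle$-invariant, giving the easy inclusion $k(s_1,u,v)\subseteq k(x,y,z)^{\langle\sigma\rangle}$.

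The heart of the proof will be to verify the two identities
\begin{equation*}
s_2 = s_1(u+v) - 3(u^2-uv+v^2), \qquad s_3 = s_1 uv - u^3 - v^3. \qquad (\dagger)
\end{equation*}
The cleanest derivation is to pass to $k(\omega)$ with $\omega$ a primitive cube root of unity and to set $Y=x+\omega y+\omega^2 z$ and $Z=x+\omega^2 y+\omega z$, so that $\sigma(Y)=\omega^{-1}Y$ and $\sigma(Z)=\omega Z$. Short expansions yield $YZ=r$ together with
\begin{equation*}
Y^3 = r\bigl(s_1+3\omega u+3\omega^2 v\bigr), \qquad Z^3 = r\bigl(s_1+3\omega^2 u+3\omega v\bigr).
\end{equation*}
Multiplying these and using $Y^3Z^3=(YZ)^3=r^3$ produces the $\mathrm{Gal}(k(\omega)/k)$-invariant identity $r=s_1^2-3s_1(u+v)+9(u^2-uv+v^2)$, which rearranges to the first equation of $(\dagger)$; a parallel use of $Y^3+Z^3=2s_1^3-9s_1s_2+27s_3$ combined with the first identity yields the second. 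Because $(\dagger)$ amounts to a pair of polynomial identities in $\mathbb Z[x,y,z]$ after clearing the denominator $r$, it holds over any field $k$, including the characteristic~$3$ case where the $\omega$-diagonalization itself is unavailable.

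With $(\dagger)$ established, $s_2$ and $s_3$ lie in $k(s_1,u,v)$, so $k(s_1,s_2,s_3)\subseteq k(s_1,u,v)$; combined with the fact that $u=(p-3s_3)/r\in k(s_1,u,v)$ already generates the quadratic extension $k(x,y,z)^{\langle\sigma\rangle}/k(s_1,s_2,s_3)$, this forces $k(s_1,u,v)=k(x,y,z)^{\langle\sigma\rangle}$. For the second asserted equality, rearranging the $s_3$-identity in $(\dagger)$ gives $s_1=(s_3+u^3+v^3)/(uv)$, which is legitimate since $uv$ is a nonzero element of $k(x,y,z)$; hence $s_1\in k(s_3,u,v)$ and therefore $k(s_1,u,v)=k(s_3,u,v)$. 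The main obstacle is the verification of $(\dagger)$: the $\omega$-diagonalization makes the formulas emerge naturally as Galois-invariant combinations of $Y^3$, $Z^3$, and $YZ$ rather than as unmotivated polynomials, but one must track the cube-root coefficients carefully when expanding, and then separately argue that the resulting rational identities descend to $\mathbb Z$ so as to cover arbitrary characteristic.
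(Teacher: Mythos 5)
Your proof is correct, but there is nothing in the paper to compare it against: Theorem 2.5 is stated as a quoted preliminary, attributed to Masuda \cite[Theorem 3]{Ma} and to \cite[Theorem 2.2]{HoK2}, with a remark that Masuda's original formula contained a misprint later corrected by Rikuna; the paper supplies no proof of its own. Judged on its own merits, your Lagrange-resolvent argument is sound and is essentially the classical derivation. I checked the key computations: with $Y=x+\omega y+\omega^2 z$, $Z=x+\omega^2 y+\omega z$ one indeed gets $YZ=r$, $Y^3=r(s_1+3\omega u+3\omega^2 v)$, $Z^3=r(s_1+3\omega^2 u+3\omega v)$, and dividing $Y^3Z^3=r^3$ by $r^2$ (legitimate, as $r\ne 0$) together with $Y^3+Z^3=2s_1^3-9s_1s_2+27s_3$ yields exactly your pair of identities $s_2=s_1(u+v)-3(u^2-uv+v^2)$ and $s_3=s_1uv-u^3-v^3$; your device of clearing the denominator $r$ and invoking integrality of the resulting identity in $\bm{Z}[x,y,z]$ is the right way to cover characteristics $2$ and $3$, where the $\omega$-diagonalization is unavailable or degenerate. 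Two small assertions deserve a line of justification each, though neither is a genuine gap: (i) that $u$ generates the quadratic extension requires $u\ne v$, which holds because $u-v=(p-q)/r$ and $p-q=(x-y)(y-z)(x-z)\ne 0$, combined with the observation that $\langle\sigma\rangle$ is normal in $S_3$, so the residual $C_2=S_3/\langle\sigma\rangle$ acts on $k(x,y,z)^{\langle\sigma\rangle}$ with fixed field exactly $k(s_1,s_2,s_3)$ in every characteristic, and a transposition swaps $u$ and $v$; (ii) that $uv\ne 0$, which holds because $p-3s_3=x^2y+y^2z+z^2x-3xyz$ and $q-3s_3$ are nonzero polynomials in all characteristics. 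With those two lines added, your argument is a complete and self-contained proof of both equalities $k(x,y,z)^{\langle\sigma\rangle}=k(s_1,u,v)=k(s_3,u,v)$, which is more than the paper itself records.
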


\begin{remark}
The formula of $u$ and $v$ was essentially due to Masuda \cite[Theorem 3]{Ma} with a misprint
in the original expression.
The error was corrected by Rikuna \cite{Ri}.
For the details,
see the paragraph before Theorem 2.2 of \cite{HoK1}.
\end{remark}

\begin{theorem}[Hajja \cite{Ha}] \label{t2.6}
Let $k$ be any field, and $G$ be a finite group acting on the rational function field $k(x,y)$ by
monomial $k$-automorphisms.
Then $k(x,y)^G$ is rational over $k$.
\end{theorem}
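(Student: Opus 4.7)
The plan is to reduce to a short case-by-case analysis via the classification of finite subgroups of $GL_2(\bm{Z})$. Up to $GL_2(\bm{Z})$-conjugation there are only thirteen such subgroups, the largest two being copies of the dihedral group of order $12$. A conjugation $\gamma^{-1}G\gamma$ with $\gamma\in GL_2(\bm{Z})$ corresponds to an invertible monomial substitution $x\mapsto x^{a}y^{c}$, $y\mapsto x^{b}y^{d}$, which carries a monomial $G$-action to a monomial $(\gamma^{-1}G\gamma)$-action (possibly with new cocycle scalars in $k^{\times}$); hence it suffices to verify the theorem for one convenient representative of each class.

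I would first dispose of the cyclic cases. The three $GL_2(\bm{Z})$-conjugacy classes of involutions yield: the ``negation'' $\sigma=-I$, which acts as $\sigma(x)=a/x$, $\sigma(y)=b/y$ and is settled by Theorem \ref{t2.3} with parameter $c=0$, $d=b$; the ``swap'' $x\leftrightarrow y$, for which the invariants after enforcing $\sigma^{2}=1$ are of symmetric-function type $x+\alpha y$ and $\alpha xy$; and the ``flip one coordinate'' $y\mapsto\beta/y$ with $x$ fixed, whose invariants are $x$ and $y+\beta/y$. For the order-$3$ element, which is conjugate to $\sigma(x)=\alpha y$, $\sigma(y)=\beta/(xy)$, I would introduce a third symbol $z=\gamma/(xy)$ with $\gamma$ chosen so that $\sigma$ cycles $x\to y\to z\to x$, apply Masuda's Theorem \ref{t2.5} to this cyclic permutation in three variables, and then eliminate $z$ via the relation $xyz=\mathrm{const}$. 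The cyclic groups of order $4$ and $6$ both contain $-I$ as their unique subgroup of index $2$; I would take $\langle-I\rangle$-invariants first via Theorem \ref{t2.3} and then apply Theorem \ref{t2.1} to the residual quotient action, which turns out to be linear affine on the resulting invariants.

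For the dihedral cases, I would write $G=\langle\sigma\rangle\rtimes\langle\tau\rangle$ with $\sigma$ generating the maximal cyclic normal subgroup, compute $k(u,v)=k(x,y)^{\langle\sigma\rangle}$ by the cyclic step just described, and then inspect the induced action of $\tau$ on $(u,v)$. In each case this induced action is either a monomial action in two variables again (to which the cyclic step reapplies) or a linear affine action on one variable over the subfield generated by the other, covered by Theorem \ref{t2.1}. The main obstacle throughout will be the bookkeeping of the cocycle scalars $b_{j}(\sigma)\in k^{\times}$: at each reduction the new generators $u,v$ pick up fresh $k^{\times}$-coefficients coming from the cocycle, and one has to check that these coefficients lie in $k$ rather than in some algebraic extension, so that the hypotheses of Theorems \ref{t2.1}--\ref{t2.5} are genuinely met. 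Once this scalar bookkeeping is in place, each of the thirteen conjugacy classes collapses to a direct computation.
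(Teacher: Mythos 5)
The first thing to say is that the paper does not prove this statement at all: Theorem \ref{t2.6} is imported verbatim from Hajja's paper [Ha] and used as a black box (its only role here is Step 3 of Theorem \ref{tA2}). So there is no in-paper proof to compare against; the relevant benchmark is Hajja's published argument, and your plan --- reduce by $GL_2(\bm{Z})$-conjugation to the finitely many conjugacy classes of finite subgroups of $GL_2(\bm{Z})$ and settle each class by explicit invariants --- is essentially a reconstruction of that argument, with the paper's Theorems \ref{t2.1}, \ref{t2.3}, \ref{t2.4}, \ref{t2.5} playing the role of Hajja's hand computations. The overall architecture is sound: the conjugation-equivariance of monomial actions is correct, the $-I$ case is exactly Theorem \ref{t2.3} with $c=0$, $d=b$ (or Theorem \ref{t2.4} when $a=b$), and your dichotomy for the residual actions checks out in the cases I verified (e.g.\ for $C_4$ with trivial cocycle, the residual involution on Yamasaki's $u,v$ is again monomial, $u\mapsto -1/u$, $v\mapsto 1/v$).

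There are, however, concrete gaps to patch. The most serious is the $C_3$ step: Theorem \ref{t2.5} is stated for three \emph{algebraically independent} variables, whereas your $X,Y,Z$ satisfy $XYZ=c$ with $c=\alpha^2\beta$. A change of scaled generators multiplies $c$ by a cube, so $c$ is a genuine invariant in $k^\times/(k^\times)^3$ and cannot in general be normalized to $1$; you therefore cannot quote Theorem \ref{t2.5} directly, but must verify separately that the specializations of Masuda's $u,v$ to the hypersurface $XYZ=c$ still generate the fixed field (equivalently, that $X$ satisfies a cubic over the field they generate). The paper itself performs exactly this maneuver in Step 5 of Section 4, but there the product is identically $1$ and the generation claim is checked computationally --- that verification is the missing content, not a formality, and it must be done for every cube class of $c$. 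Smaller issues: the classification slip (there is only one class of order $12$, namely $D_6$; the two largest classes are $D_4$ of order $8$ and $D_6$ of order $12$); the ``flip'' case $\mathrm{diag}(1,-1)$ has the cocycle subcase $\sigma(x)=-x$, $\sigma(y)=\beta/y$, which your stated invariants $x$, $y+\beta/y$ miss (it is rescued by Theorem \ref{t2.1} with $f=(y-\beta/y)x$, but this is precisely the scalar bookkeeping you defer); and the assertions that the residual actions in the $C_6$ and dihedral cases are ``monomial or affine'' are plausible but carry the bulk of the work --- in Hajja's paper these are several pages of case computations, and for $D_4$, $D_6$ with general cocycles the induced action on the chosen invariants need not land in either shape until the generators are adjusted. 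In short: right strategy, matching the cited source, but the proposal as written would stall at the dependent-variable use of Theorem \ref{t2.5} and at the unverified residual-action claims.
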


\section{Sets of generators of the four groups}

In this section we recall the sets of generators of the groups $(4,33,3)$, $(4,33,6)$, $(4,33,7)$
and $(4,33,11)$ given in \cite[pages 377--378]{KY}.

Note that various matrices in $GL_4(\bm{Q})$ are defined on \cite[page 362]{KY} and
groups of crystal systems 33 are defined on \cite[page 369]{KY}.
In particular, the group $Q$ in the notation of \cite{KY} is isomorphic to the quaternion group of order 8,
and $Q$ is generated by $i$ and $ij$.
We write explicitly these matrices in $GL_4(\bm{Q})$ as follows:
\begin{align*}
i &= \left(\hs{-1}\begin{array}{cc@{\vds}cc}
0 & 1 & & \\ -1 & 0 & & \\[-3\jot] \hds{4} \\[-1\jot] & & 0 & 1 \\ & & -1 & 0
\end{array}\hs{-1}\right),  &
ij &= \left(\hs{-1}\begin{array}{cc@{\vds}cc}
& & 0 & -1 \\ & & -1 & 0 \\[-3\jot] \hds{4} \\[-1\jot] 0 & 1 & & \\ 1 & 0 & &
\end{array}\hs{-1}\right),    \\
\alpha &= \frac{1}{2}\left(\begin{array}{@{}rrrr@{}}
-1 & -1 & -1 & 1 \\ 1 & -1 & 1 & 1 \\ 1 & -1 & -1 & -1 \\ -1 & -1 & 1 & -1
\end{array}\right), &
\alpha_0 &= \left(\hs{-1}\begin{array}{c@{}c@{}cc@{}c@{}c}
-1 & \vds & & & & \\[-3\jot] \hds{5} & \\[-1\jot]
& \vds & 0 & -1 & \vds & \\
& \vds & -1 & 0 & \vds & \\[-3\jot] & \hds{5} \\[-1\jot]
& & & & \vds & 1
\end{array}\hs{-1}\right), \\
\alpha_1 &=\frac{1}{2}\left(\begin{array}{@{}rrrr@{}}
-1 & -1 & -1 & -1 \\ 1 & -1 & 1 & -1 \\ 1 & -1 & -1 & 1 \\ 1 & 1 & -1 & -1
\end{array}\right), &
k_1\alpha_0 &= \left(\hs{-1}\begin{array}{c@{}c@{}cc@{}c@{}c}
& & & & \vds & 1 \\[-3\jot] & \hds{5} \\[-1\jot]
& \vds & 1 & 0 & \vds & \\
& \vds & 0 & -1 & \vds & \\[-3\jot] \hds{5} & \\[-1\jot]
1 & \vds & & & &
\end{array}\hs{-1}\right).
\end{align*}

We will define a matrix $T\in GL_4(\bm{Q})$ by
\[
T=\left(\hs{-1}\begin{array}{cc@{\vds}cc}
0 & 1 & & \\ 1 & 0 & & \\[-3\jot] \hds{4} \\[-1\jot]
& & 1 & 0 \\ & & 0 & 1
\end{array}\hs{-1}\right).
\]

We define $\lambda_1$, $\lambda_2$, $\sigma$, $\tau$, $\lambda_3$, $\lambda_4$ which are conjugates of
$i$, $ij$, $\alpha$, $\alpha_0$, $\alpha_1$, $k_1\alpha_0$ by the conjugation associated to $T$.
Explicitly, we get
\begin{align*}
\lambda_1 &= T\cdot i\cdot T^{-1}=\left(\hs{-1}\begin{array}{cc@{\vds}cc}
0 & -1 & & \\ 1 & 0 & & \\[-3\jot] \hds{4} \\[-1\jot] & & 0 & 1 \\ & & -1 & 0
\end{array}\hs{-1}\right), &
\lambda_2 &= T\cdot ij \cdot T^{-1}=\left(\hs{-1}\begin{array}{cc@{\vds}cc}
& & -1 & 0 \\ & & 0 & -1 \\[-3\jot] \hds{4} \\[-1\jot] 1 & 0 & & \\ 0 & 1 & &
\end{array}\hs{-1}\right), \\
\sigma &= T\cdot \alpha\cdot T^{-1}=\frac{1}{2}\left(\begin{array}{@{}rrrr@{}}
-1 & 1 & 1 & 1 \\ -1 & -1 & -1 & 1 \\ -1 & 1 & -1 & -1 \\ -1 & -1 & 1 & -1
\end{array}\right), &
\tau &= T\cdot\alpha_0\cdot T^{-1}=\left(\hs{-1}\begin{array}{ccc@{\vds}c}
& & -1 & \\ & -1 & & \\ -1 & & & \\[-3\jot] \hds{4} \\[-1\jot] & & & 1
\end{array}\hs{-1}\right), \\
\lambda_3 &= T\cdot\alpha_1\cdot T^{-1}=\frac{1}{2}\left(\begin{array}{@{}rrrr@{}}
-1 & 1 & 1 & -1 \\ -1 & -1 & -1 & -1 \\ -1 & 1 & -1 & 1 \\ 1 & 1 & -1 & -1
\end{array}\right), &
\lambda_4 &= T\cdot k_1\alpha_0\cdot T^{-1}=\left(\hs{-1}\begin{array}{c@{\vds}ccc}
1 & & & \\[-3\jot] \hds{4} \\[-1\jot] & & & 1 \\ & & -1 & \\ & 1 & &
\end{array}\hs{-1}\right).
\end{align*}

\begin{defn} \label{d3.1}
The group $(4,33,3)$ is generated by $\lambda_1$, $\lambda_2$, $\sigma$;
the group $(4,33,6)$ is generated by $\lambda_1$, $\lambda_2$, $\sigma$, $\tau$;
the group $(4,33,7)$ is generated by $\lambda_1$, $\lambda_2$, $\sigma$, $\lambda_3$;
the group $(4,33,11)$ is generated by $\lambda_1$, $\lambda_2$, $\sigma$, $\lambda_3$, $\lambda_4$.
\end{defn}

Note that, as abstract groups, the group $(4,33,3)$ is isomorphic
to the group $SL_2(\bm{F}_3)$, the group $(4,33,6)$ is isomorphic
to the group $GL_2(\bm{F}_3)$, the group $(4,33,7)$ is a central
extension of the group $SL_2(\bm{F}_3)$, while the group
$(4,33,11)$ is of order $144$.

\section{The rationality of the group (4,33,3)}

Throughout this section $k$ is any field with $\fn{char}k\ne 2$,
and $G$ is the group $(4,33,3)$.
Thus $G=\langle \lambda_1,\lambda_2,\sigma\rangle$ by Definition \ref{d3.1}.
$G$ acts on $k(x_1,x_2,x_3,x_4)$ by $k$-automorphisms defined as
\begin{equation}
\begin{aligned}
\lambda_1:{} & x_1 \mapsto x_2,~ x_2\mapsto -x_1,~ x_3\mapsto -x_4,~ x_4\mapsto x_3, \\
\lambda_2:{} & x_1\mapsto x_3\mapsto -x_1,~ x_2\mapsto x_4\mapsto -x_2, \\
\sigma:{} & x_1\mapsto -(x_1+x_2+x_3+x_4)/2,~ x_2\mapsto (x_1-x_2+x_3-x_4)/2, \\
& x_3\mapsto (x_1-x_2-x_3+x_4)/2,~ x_4\mapsto (x_1+x_2-x_3-x_4)/2.
\end{aligned} \label{4-1}
\end{equation}

Step 1.
Let $\pi=\fn{Gal}(k(\sqrt{-1})/k)$.
Note that, if $\sqrt{-1}\in k$, then $\pi=\{1\}$; if $\sqrt{-1} \notin k$,
then $\pi=\langle \rho\rangle$ with $\rho(\sqrt{-1})=-\sqrt{-1}$.

We can extend the actions of $G$ and $\pi$ to $k(\sqrt{-1})(x_1,x_2,x_3,x_4)$ by requiring that
$G$ acts trivially on $k(\sqrt{-1})$ and $\pi$ acts trivially on $x_1$, $x_2$, $x_3$, $x_4$.
It follows that $k(x_1,x_2,x_3,x_4)^G=\{k(\sqrt{-1})(x_1,x_2,x_3,x_4)^{\pi}\}^G
=k(\sqrt{-1})(x_1,x_2,x_3,x_4)^{\langle G,\pi\rangle}$.

\bigskip

Step 2.
Define $y_1$, $y_2$, $y_3$, $y_4$ by
\begin{equation}
y_1=x_1-\sqrt{-1}x_2,~ y_2=x_3-\sqrt{-1}x_4,~ y_3=x_3+\sqrt{-1}x_4,~ y_4=-x_1-\sqrt{-1}x_2. \label{4-2}
\end{equation}
In other words, we define $y_1$, $y_2$, $y_3$, $y_4$ by
\[
\begin{pmatrix} y_1 \\ y_2 \\ y_3 \\ y_4  \end{pmatrix} =
\begin{pmatrix} 1 & -\sqrt{-1} & 0 & 0 \\ 0 & 0 & 1 & -\sqrt{-1} \\
0 & 0 & 1 & \sqrt{-1} \\ -1 & -\sqrt{-1} & 0 & 0 \end{pmatrix}
\begin{pmatrix} x_1 \\ x_2 \\ x_3 \\ x_4  \end{pmatrix}.
\]

It follows that $k(\sqrt{-1})(x_1,x_2,x_3,x_4)=k(\sqrt{-1})(y_1,y_2,y_3,y_4)$ and the actions of $G$
and $\pi$ (if $\sqrt{-1}\notin k$) are given by
\begin{equation}
\begin{aligned}
\lambda_1:{} & y_1\mapsto \sqrt{-1} y_1,~ y_2\mapsto -\sqrt{-1} y_2,~
  y_3\mapsto \sqrt{-1} y_3,~ y_4\mapsto -\sqrt{-1} y_4,  \\
\lambda_2:{} & y_1\mapsto y_2\mapsto -y_1,~ y_3\mapsto y_4\mapsto -y_3, \\
\sigma:{} & y_1\mapsto (-1-\sqrt{-1})(y_1+y_2)/2,~ y_2\mapsto (1-\sqrt{-1})(y_1-y_2)/2, \\
& y_3\mapsto (-1-\sqrt{-1})(y_3+y_4)/2,~ y_4\mapsto (1-\sqrt{-1})(y_3-y_4)/2, \\
\rho:{} & \sqrt{-1}\mapsto -\sqrt{-1},~ y_1\mapsto -y_4,~ y_2\leftrightarrow y_3,~ y_4\mapsto -y_1.
\end{aligned} \label{4-3}
\end{equation}

\bigskip

Step 3. By the same arguments as in Step 1, it is easy to see that
$k(x_1/x_4,x_2/x_4,$ $x_3/x_4)^G =
k(\sqrt{-1})(y_1/y_2,y_3/y_4,y_1/y_3)^{\langle G,\pi\rangle}$.

Define $z_1=y_1/y_2$, $z_2=y_3/y_4$, $z_3=y_1/y_3$. By Theorem
\ref{t2.1}, we find that $k(\sqrt{-1})(y_1,y_2,y_3,y_4)^{\langle
G,\pi\rangle} =k(\sqrt{-1})(z_1,z_2,z_3)(y_4)^{\langle
G,\pi\rangle}=k(\sqrt{-1})(z_1,z_2,z_3)^{\langle
G,\pi\rangle}(z_0)$ where $z_0$ is fixed by the actions of $G$ and
$\pi$.

It remains to show that $k(\sqrt{-1})(z_1,z_2,z_3)^{\langle
G,\pi\rangle}$ is rational over $k$.

\bigskip

Step 4.
Define $u_1=z_1/z_2$, $u_2=z_1z_2$, $u_3=z_3$.
Then $k(\sqrt{-1})(z_1,z_2,z_3)^{\langle\lambda_1\rangle}=k(\sqrt{-1})(u_1,u_2,u_3)$.
Moreover, $\lambda_2$ acts on $u_1$, $u_2$, $u_3$ by
\[
\lambda_2: u_1 \mapsto 1/u_1,~ u_2\mapsto 1/u_2,~ u_3\mapsto u_3/u_1.
\]

Define $v_3=u_3(1+(1/u_1))$.
Then $k(\sqrt{-1})(u_1,u_2,u_3)=k(\sqrt{-1})(u_1,u_2,v_3)$.
By Theorem \ref{t2.4},
we find that $k(\sqrt{-1})(u_1,u_2,v_3)^{\langle\lambda_2\rangle}=k(\sqrt{-1})(v_1,v_2,v_3)$
where $v_1= (u_1-u_2)/(1-u_1u_2)$, $v_2=(u_1+u_2)/(1+u_1u_2)$.
In summary, we get $k(\sqrt{-1})(z_1,z_2,\break z_3)^{\langle \lambda_1,\lambda_2\rangle}=k(\sqrt{-1})(v_1,v_2,v_3)$.

\bigskip

Step 5.
Note that the actions of $\sigma$ and $\rho$ (if $\sqrt{-1}\notin k$) are given by
\begin{align*}
\sigma:{} & v_1\mapsto v_2/v_1,~ v_2\mapsto 1/v_1,~ v_3\mapsto v_3(v_1+v_2)/(v_1(1+v_2)) \\
\rho:{} & \sqrt{-1}\mapsto -\sqrt{-1},~ v_1\mapsto 1/v_1,~ v_2\mapsto 1/v_2,~
v_3\mapsto -2(1+v_1)(1+v_2)/(v_3(v_1+v_2)).
\end{align*}

Define $X_3=v_3(1+v_1+v_2)/((1+v_1)(1+v_2))$.
Then $k(\sqrt{-1})(v_1,v_2,v_3)=k(\sqrt{-1})\break (v_1,v_2,X_3)$ and $\sigma(X_3)=X_3$.

Thus $k(\sqrt{-1})(v_1,v_2,v_3)^{\langle\sigma\rangle}=k(\sqrt{-1})(v_1,v_2)^{\langle\sigma\rangle}(X_3)
=k(\sqrt{-1})(X_1,X_2,X_3)$ by Theorem \ref{t2.5} (regarding $v_1$, $v_2/v_1$, $1/v_2$ as $x$, $y$, $z$ in
Theorem \ref{t2.5} and defining $X_1$ and $X_2$ as $u$ and $v$ there) where $X_1$ and $X_2$ are defined by
\begin{align*}
X_1 &= (v_1^3v_2^3+v_1^3+v_2^3-3v_1^2v_2^2)/(v_1^4v_2^2+v_2^4+v_1^2-v_1^2v_2^3-v_1v_2^2-v_1^3v_2), \\
X_2 &=
(v_1v_2^4+v_1v_2+v_1^4v_2-3v_1^2v_2^2)/(v_1^4v_2^2+v_2^4+v_1^2-v_1^2v_2^3-v_1v_2^2-v_1^3v_2).
\end{align*}

In conclusion, $k(\sqrt{-1})(z_1,z_2,z_3)^G=k(\sqrt{-1})(X_1,X_2,X_3)$.

\bigskip

Step 6. If $\sqrt{-1}\in k$, we find that $\pi=\{1\}$. Hence
$k(\sqrt{-1})(z_1,z_2,z_3)^G=k(z_1,z_2,z_3)^G$ $=k(X_1,X_2,X_3)$
is rational over $k$.

From now on, we assume that $\sqrt{-1}\notin k$ and $\pi=\langle
\rho\rangle$. We will show that $k(\sqrt{-1})\break
(X_1,X_2,X_3)^{\langle\rho\rangle}$ is rational over $k$.

We use the computer to perform the action of $\rho$ on $X_1$, $X_2$, $X_3$.
We get
\[
\rho: X_1\mapsto X_2/(X_1^2-X_1X_2+X_2^2), ~X_2\mapsto X_1/(X_1^2-X_1X_2+X_2^2),~ X_3\mapsto -2A/X_3
\]
where $A=g_1g_2g_3^{-1}$ and
\begin{align*}
g_1 &= (1+X_1)^2-X_2(1+X_1)+X_2^2, ~~ g_2= (1+X_2)^2-X_1(1+X_2)+X_1^2, \\
g_3 &= 1+X_1+X_2+X_1^3+X_2^3+X_1X_2(3X_1X_2-2X_1^2-2X_2^2+2)+X_1^4+X_2^4.
\end{align*}

Note that $\rho(g_1)=g_2/(X_1^2-X_1X_2+X_2^2)$.

Define $Y_1=X_1/X_2$, $Y_2=X_1$, $Y_3=X_3/g_1$.
Then $k(\sqrt{-1})(X_1,X_2,X_3)=k(\sqrt{-1})\break (Y_1,Y_2,Y_3)$ and
\[
\rho: Y_1\mapsto 1/Y_1,~ Y_2\mapsto Y_1/(Y_2(1-Y_1+Y_1^2)),~ Y_3\mapsto B/Y_3
\]
where $B=-2Y_1^2Y_2^2(1-Y_1+Y_1^2)/[Y_1^4+Y_2^4+Y_1Y_2(Y_1^2+Y_2^2)+Y_1Y_2(Y_1^3-2Y_2^3+2Y_1^2Y_2)
+3Y_1^2Y_2^4+Y_1^3Y_2^3(Y_1-2Y_2)+Y_1^4Y_2^4]$.

Note that $k(\sqrt{-1})(Y_1,Y_2)^{\langle\rho\rangle}$ is the
function field of a conic bundle over
$k(\sqrt{-1})(Y_1)^{\langle\rho\rangle}$ (which is the function
field of $\bm{P}^1_k$), and
$k(\sqrt{-1})(Y_1,Y_2,Y_3)^{\langle\rho\rangle}$ is the function
field of another conic bundle over some $k$-surface whose function
field is $k(\sqrt{-1})(Y_1,Y_2)^{\langle\rho\rangle}$.

\bigskip

Step 7.
It is not difficult to verify that $k(\sqrt{-1})(Y_1,Y_2,Y_3)^{\langle\rho\rangle}=k(U_0,U_1,U_2,U_3,U_4)$
where $U_i$'s are defined by
\begin{align*}
U_0 &= \sqrt{-1}(1-Y_1)/(1+Y_1), ~~ U_1=Y_2+Y_1/(Y_2(1-Y_1+Y_1^2)), \\
U_2 &= \sqrt{-1}[Y_2-Y_1/(Y_2(1-Y_1+Y_1^2))], ~~ U_3=Y_3+B/Y_3, ~~ U_4=\sqrt{-1}(Y_3-B/Y_3)
\end{align*}
with the relations
\begin{equation}
U_1^2+U_2^2=4(1+U_0^2)/(1-3U_0^2) \mbox{ ~ and ~ } U_3^2+U_4^2=4B. \label{4-4}
\end{equation}

We will simplify the two relations in Formula \eqref{4-4}.

For example, in the first relation of Formula \eqref{4-4},
multiply both sides by $(1+U_0^2)$. Use the identity
$(a^2+b^2)(c^2+d^2)=(ac-bd)^2+(ad+bc)^2$ to simplify the
left-hand-side of the resulting relation. In other words, define
\[
V_1=(U_0U_1+U_2)(1-3U_0^2)/(2+2U_0^2), ~~ V_2=(U_0U_2-U_1)(1-3U_0^2)/(2+2U_0^2).
\]

We find the $k(U_0,U_1,U_2)=k(U_0,V_1,V_2)$ and the relation becomes
\[
V_1^2+V_2^2=1-3U_0^2.
\]

The above relation can be written as
\begin{equation}
[V_1^2/(1+V_2)^2]-[(1-V_2)/(1+V_2)]=-3U_0^2/(1+V_2)^2. \label{4-5}
\end{equation}

Define $W_1=V_1/(1+V_2)$, $W_2=U_0/(1+V_2)$.
The relation \eqref{4-5} guarantees that $(1-V_2)/(1+V_2) \in k(W_1,W_2)$.
Thus $V_2\in k(W_1,W_2)$.
It follows that $k(U_0,V_1,V_2)=k(W_1,W_2)$.

Now we rewrite the second relation $U_3^2+U_4^2=4B$ of Formula \eqref{4-4} in terms of $W_1$, $W_2$.
We get
\begin{equation}
U_3^2+U_4^2=[4W_1^2+(W_1^2+3W_2^2-1)^2]/[W_1^2-W_2^2-(W_1^2+3W_2^2)^2]. \label{4-6}
\end{equation}

Use the similar trick as above to simplify the relation \eqref{4-6}.
In short, define
\begin{align*}
W_3 &= [U_3(W_1^2{+}3W_2^2{-}1)+2U_4W_1][W_1^2-W_2^2-(W_1^2{+}3W_2^2)^2]/[4W_1^2+(W_1^2{+}3W_2^2{-}1)^2], \\
W_4 &= [U_4(W_1^2{+}3W_2^2{-}1)-2U_3W_1][W_1^2-W_2^2-(W_1^2{+}3W_2^2)^2]/[4W_1^2+(W_1^2{+}3W_2^2{-}1)^2].
\end{align*}

We get $k(W_1,W_2,U_3,U_4)=k(W_1,W_2,W_3,W_4)$ and the relation \eqref{4-6} becomes
\begin{equation}
W_3^2+W_4^2-W_1^2+W_2^2+(W_1^2+3W_2^2)^2=0. \label{4-7}
\end{equation}

Define $w_1=W_1/(W_1^2+3W_2^2)$, $w_2=W_2/(W_1^2+3W_2^2)$, $w_3=W_3/(W_1^2+3W_2^2)$, $w_4=W_4/(W_1^2+3W_2^2)$.
We find that $k(W_1,W_2,W_3,W_4)=k(w_1,w_2,w_3,w_4)$ and the relation \eqref{4-7} becomes
\[
w_3^2+w_4^2-w_1^2+w_2^2+1=0.
\]

The above relation can be written as
\[
w_3^2+w_4^2+1=(w_1-w_2)(w_1+w_2).
\]

Thus $w_1+w_2\in k(w_1-w_2,w_3,w_4)$.
Hence $w_1,w_2\in k(w_1-w_2,w_3,w_4)$.
It follows that $k(w_1,w_2,w_3,w_4)=k(w_1-w_2,w_3,w_4)$ is rational over $k$. \qed

\section{The rationality of the group (4,33,6)}

Throughout this section $k$ is any field with $\fn{char}k\ne 2$, and $G$ is the group $(4,33,6)$.
Thus $G=\langle \lambda_1,\lambda_2,\sigma,\tau \rangle$ by Definition \ref{d3.1}.
The actions of $\lambda_1$, $\lambda_2$, $\sigma$ on $k(x_1,x_2,x_3,x_4)$ is the same
as those given in Formula \eqref{4-1}.
We record the action of $\tau$ as follows
\[
\tau: x_1\mapsto -x_3,~ x_2\mapsto -x_2,~ x_3\mapsto -x_1,~ x_4\mapsto x_4.
\]

The method to prove that $k(x_1,x_2,x_3,x_4)^G$ is $k$-rational is very similar to the method used in Section 4.
In many situations, even the formulae of changing variables are identically the same.

Step 1. Let $\pi=\fn{Gal}(k(\sqrt{-1},\sqrt{2})/k)$. Note that
$\pi$ is isomorphic to $\{1\}$, $C_2$ or $C_2\times C_2$. In case
$[k(\sqrt{-1},\sqrt{2}):k]=4$, define $\rho_1$, $\rho_2$ and
$\rho_3$ by $\rho_1(\sqrt{-1})=-\sqrt{-1}$,
$\rho_1(\sqrt{2})=\sqrt{2}$, $\rho_2(\sqrt{-1})=\sqrt{-1}$,
$\rho_2(\sqrt{2})=-\sqrt{2}$, $\rho_3(\sqrt{-1})=-\sqrt{-1}$,
$\rho_3(\sqrt{2})=-\sqrt{2}$. We get $\pi=\langle
\rho_1,\rho_2\rangle$ and $\rho_3=\rho_1 \rho_2$ in this
situation. In general, $\pi=\{1\}, \langle\rho_1\rangle,
\langle\rho_2\rangle, \langle\rho_3\rangle$ or $\langle
\rho_1,\rho_2\rangle$. In the sequel, whenever we describe the
action of $\rho_1$, we mean that $\sqrt{-1} \notin k$ and $\rho_1
\in \pi$; similarly for $\rho_2$ and $\rho_3$.

As in Step 1 of Section 4,
we may extend the actions of $G$ and $\pi$ to $k(\sqrt{-1},\sqrt{2})(x_1,\break x_2,x_3,x_4)$.

The Formula \eqref{4-2} should be modified.
We define $y_1$, $y_2$, $y_3$, $y_4$ by
\[
\begin{pmatrix} y_1 \\ y_2 \\ y_3 \\ y_4 \end{pmatrix}=
\begin{pmatrix} 1 & 0 & -1+\sqrt{2} & 0 \\ 0 & 1 & 0 & -1+\sqrt{2} \\
1-\sqrt{2} & 0 & 1 & 0 \\ 0 & 1-\sqrt{2} & 0 & 1 \end{pmatrix}
\begin{pmatrix} 1 & -\sqrt{-1} & 0 & 0 \\ 0 & 0 & 1 & -\sqrt{-1} \\
0 & 0 & 1 & -\sqrt{-1} \\ -1 & -\sqrt{-1} & 0 & 0 \end{pmatrix}
\begin{pmatrix} x_1 \\ x_2 \\ x_3 \\ x_4 \end{pmatrix}.
\]

By the same arguments as in Step 1 of Section 4, we can show that
$k(x_1,x_2,x_3,x_4)^G$ $=k(\sqrt{-1},\sqrt{2})(y_1,
y_2,y_3,y_4)^{\langle G, \pi \rangle}$ and
$k(x_1/x_4,x_2/x_4,x_3/x_4)^G=k(\sqrt{-1},\sqrt{2})(y_1/
y_2,y_3/y_4,$ $y_1/y_3)^{\langle G, \pi \rangle}$.

Moreover, the actions of $\lambda_1$, $\lambda_2$, $\sigma$ on
$y_1$, $y_2$, $y_3$, $y_4$ are the same as those given in Formula
\eqref{4-3}. The action of $\rho_1$ on $y_1$, $y_2$, $y_3$, $y_4$
is the same as the action of $\rho$ on $y_1$, $y_2$, $y_3$, $y_4$
given in Formula \eqref{4-3}; but remember that
$\rho_1(\sqrt{-1})=-\sqrt{-1}$, $\rho_1(\sqrt{2})=\sqrt{2}$. We
record the actions of $\tau$ and $\rho_2$ as follows:
\begin{align*}
\tau:{} & y_1\mapsto -(y_1+y_2)/\sqrt{2},~ y_2\mapsto (-y_1+y_2)/\sqrt{2},~
  y_3\mapsto (y_3+y_4)/\sqrt{2},~ y_4\mapsto (y_3-y_4)/\sqrt{2}, \\
\rho_2:{} & y_1\mapsto (-1-\sqrt{2})y_3,~ y_2\mapsto (-1-\sqrt{2})y_4,~
  y_3\mapsto (1+\sqrt{2})y_1,~ y_4\mapsto (1+\sqrt{2})y_2.
\end{align*}

It remains to show that $k(\sqrt{-1},\sqrt{2})(y_1/ y_2,y_3/y_4,$
$y_1/y_3)^{\langle G, \pi \rangle}$ is rational over $k$.
\bigskip

Step 2. The substitution formulae for $z_1$, $z_2$, $z_3$, $u_1$,
$u_2$, $u_3$, $v_1$, $v_2$, $v_3$, $X_1$, $X_2$, $X_3$ are
completely the same as in Step 3 $\sim$ Step 5 of Section 4. Thus
we get $k(\sqrt{-1},\sqrt{2})(y_1/y_2,$
$y_3/y_4,y_1/y_3)^{\langle\lambda_1,\lambda_2,\sigma\rangle}
=k(\sqrt{-1},\sqrt{2})(X_1,X_2,X_3)$ and
$k(\sqrt{-1},\sqrt{2})(y_1,
y_2,y_3,y_4)^{\langle\lambda_1,\lambda_2,\sigma\rangle}$
$=k(\sqrt{-1},\sqrt{2})(X_1,X_2,X_3,z_0)$ where $z_0$ is fixed by
all elements of $G$ and $\pi$.

Define $Y_1=X_1/X_2$, $Y_2=X_1$, $Y_3=X_1X_3/g_1$
(remember $A=g_1g_2g_3^{-1}$ and $A$ is defined in Step 6 of Section 4).
Note that $k(\sqrt{-1},\sqrt{2})(X_1,X_2,X_3)=k(\sqrt{-1},\sqrt{2})\break (Y_1,Y_2,Y_3)$ and $\tau$
acts on $X_i$, $Y_j$ as follows:
\begin{align*}
\tau:{} & X_1\mapsto X_1/(X_1^2-X_1X_2+X_2^2), ~ X_2\mapsto X_2/(X_1^2-X_1X_2+X_2^2),~ X_3\mapsto -X_3, \\
& Y_1\mapsto Y_1,~ Y_2\mapsto Y_1^2/(Y_2(1-Y_1+Y_1^2)),~ Y_3\mapsto -Y_3.
\end{align*}

It is easy to verify that $k(\sqrt{-1},\sqrt{2})(Y_1,Y_2,Y_3)^{\langle\tau\rangle}
=k(\sqrt{-1},\sqrt{2})(Z_1,Z_2,Z_3)$ where $Z_1=Y_1$, $Z_2=Y_2+[Y_1^2/(Y_2(1-Y_1+Y_1^2))]$,
$Z_3=Y_3\{Y_2-[Y_1^2/(Y_2(1-Y_1+Y_1^2))]\}$.

We conclude that $k(\sqrt{-1},\sqrt{2})(Y_1,Y_2,Y_3)^{\langle\tau\rangle}$ is rational over $k(\sqrt{-1},\sqrt{2})$.

If $k(\sqrt{-1},\sqrt{2})=k$, i.e.\ $\pi=\{1\}$, then
$k(x_1,x_2,x_3,x_4)^G$ is $k$-rational. In the remaining part of
this section we will consider the situations when
$\pi=\langle\rho_1,\rho_2\rangle$, $\langle\rho_1\rangle$,
$\langle\rho_2\rangle$ or $\langle\rho_3\rangle$.

\bigskip

Step 3.
We consider the case $\pi=\langle\rho_1,\rho_2\rangle$ first.

Using the computer for symbolic computation, we find that
$\rho_1(Z_i)=\rho_2(Z_i)$, $\rho_3(Z_i)=Z_i$ for $1\le i\le 3$ and
\[
\rho_1: Z_1\mapsto 1/Z_1, ~ Z_2\mapsto Z_2/Z_1,~ Z_3\mapsto C/Z_3
\]
where $C=2Z_1^2(-4Z_1^2+Z_2^2-Z_1Z_2^2+Z_1^2Z_2^2)/[(1-Z_1+Z_1^2)(-2Z_1^2+Z_1Z_2+Z_2^2+4Z_1^3-2Z_1Z_2^2
-2Z_1^4+3Z_1^2Z_2^2+Z_1^4Z_2-2Z_1^3Z_2^2+Z_1^4Z_2^2)]$.

It follows that
$k(\sqrt{-1},\sqrt{2})(Z_1,Z_2,Z_3)^{\langle\rho_1,\rho_2\rangle}=
\{k(\sqrt{-1},\sqrt{2})(Z_1,Z_2,Z_3)^{\langle\rho_1\rho_2\rangle}\}^{\langle\rho_1\rangle}
=k(\sqrt{-2})(Z_1, Z_2,Z_3)^{\langle\rho_1\rangle}$. The action of
$\rho_1$ is given by
\[
\rho_1: \sqrt{-2} \mapsto -\sqrt{-2}, ~ Z_1\mapsto 1/Z_1,~ Z_2\mapsto Z_2/Z_1, ~ Z_3\mapsto C/Z_3.
\]

\bigskip

Step 4.
It is not difficult to verify that $k(\sqrt{-2})(Z_1,Z_2,Z_3)^{\langle\rho_1\rangle}=k(U_1,U_2,U_3,U_4)$
where $U_1$, $U_2$, $U_3$, $U_4$ are defined by
\begin{alignat*}{2}
U_1 &= Z_2(1+(1/Z_1)), &~~ U_2 &= \sqrt{-2} Z_2(1-(1/Z_1)), \\
U_3 &= Z_3+(C/Z_3), & U_4 &= \sqrt{-2} (Z_3-(C/Z_3))
\end{alignat*}
with the relation
\begin{equation}
\begin{aligned}
2U_3^2+U_4^2 &= 16(-32+2U_1^2-3U_2^2)(2U_1^2+U_2^2)^2/[(2U_1^2-3U_2^2) (16U_1^3\\
&\qquad +4U_1^4+64U_2^2-24U_1U_2^2-12U_1^2U_2^2+9U_2^4)].
\end{aligned} \label{5-1}
\end{equation}

We will simplify the relation \eqref{5-1}. Use the same technique
as Step 7 in Section 4. Define
\[
V_1=8U_1/(2U_1^2-3U_2^2),~~ V_2=4U_2/(2U_1^2-3U_2^2),~~ V_3=\alpha
U_3, ~~ V_4=\alpha U_4
\]
where
$\alpha=(16U_1^3+4U_1^4+64U_2^2-24U_1U_2^2-12U_1^2U_2^2+9U_2^4)/[4(2U_1^2+U_2^2)(2U_1^2-3U_2^2)]$.

It follows that $k(U_1,U_2)=k(V_1,V_2)$,
$k(U_1,U_2,U_3,U_4)=k(V_1,V_2,V_3,V_4)$ and the relation
\eqref{5-1} becomes
\begin{equation}
2V_3^2+V_4^2=(1-V_1^2+6V_2^2)(1+V_1+4V_2^2). \label{5-2}
\end{equation}

Define $w_1=1/(1+V_1)$, $w_2=V_2/(1+V_1)$, $w_3=V_3/(1+V_1)^2$, $w_4=V_4/(1+V_1)^2$.
Then $k(V_1,V_2,V_3,V_4)=k(w_1,w_2,w_3,w_4)$ and the relation \eqref{5-2} becomes
\[
2w_3^2+w_4^2=(w_1+4w_2^2)(2w_1-1+6w_2^2).
\]

Thus we get a relation
\begin{equation}
2(w_3/(w_1+4w_2^2))^2+(w_4/(w_1+4w_2^2))^2=(2w_1-1+6w_2^2)/(w_1+4w_2^2). \label{5-3}
\end{equation}

It follows that $(2w_1-1+6w_2^2)/(w_1+4w_2^2)\in k(w_2,w_3/(w_1+4w_2^2),w_4/(w_1+4w_2^2))$.
Hence $w_1$ belongs to this field.
We find that $k(w_1,w_2,w_3,w_4)=k(w_2,w_3/(w_1+4w_2^2),\break w_4/(w_1+4w_2^2))$ is rational over $k$.

\bigskip

Step 5. Consider the case $\pi=\langle\rho_3\rangle$ i.e.\
$\sqrt{-2}\in k$. Hence
$k(\sqrt{-1},\sqrt{2})(Z_1,Z_2,Z_3)^{\langle\rho_3\rangle}=k(Z_1,Z_2,Z_3)$
is $k$-rational. Done.

Consider the case $\pi=\langle\rho_1\rangle$, i.e.\ $\sqrt{2}\in
k$. Then $k(\sqrt{-1},\sqrt{2})(Z_1,Z_2,Z_3)^{\langle\pi
\rangle}=k(\sqrt{-2})\break (Z_1,Z_2,Z_3)^{\langle\rho_1\rangle}$.
The action of $\rho_1$ is the same as that in the last line of
Step 3. The proof of rationality of the present situation is
completely the same as in Step 4. Done.

The case $\pi=\langle\rho_2\rangle$, i.e.\ $\sqrt{-1}\in k$, can
be discussed in a similar way (note that the actions of $\rho_1$
and $\rho_2$ are the same on $Z_1, Z_2, Z_3$). The details of the
proof is omitted. \qed

\section{The rationality of the groups (4,33,7) and (4,33,11)}

In this section we assume that $k$ is any field with $\fn{char}k\ne
2,3$. Let $G$ be the group $(4,33,7)$ or $(4,33,11)$. We will show
that $k(x_1,x_2,x_3,x_4)^G$ is rational over $k$.

By Definition \ref{d3.1}, if $G$ is the group $(4,33,7)$,
then $G=\langle \lambda_1,\lambda_2,\sigma,\lambda_3 \rangle$;
if $G$ is the group $(4,33,11)$,
then $G=\langle \lambda_1,\lambda_2,\sigma,\lambda_3,\lambda_4 \rangle$.

As in Section 5, the proof in this section is very similar to that in Section 4.

Step 1. Let $\pi=\fn{Gal}(k(\sqrt{-1},\sqrt{3})/k)$. If
$[k(\sqrt{-1},\sqrt{3}):k]=4$, define $\rho_1$, $\rho_2$ and
$\rho_3$ by $\rho_1(\sqrt{-1})=-\sqrt{-1}$,
$\rho_1(\sqrt{3})=\sqrt{3}$, $\rho_2(\sqrt{-1})=\sqrt{-1}$,
$\rho_2(\sqrt{3})=-\sqrt{3}$, $\rho_3(\sqrt{-1})=-\sqrt{-1}$,
$\rho_3(\sqrt{3})=-\sqrt{3}$. Then
$\pi=\langle\rho_1,\rho_2\rangle$. In general, $\pi=\{1\}$,
$\langle\rho_1\rangle$, $\langle\rho_2\rangle$,
$\langle\rho_3\rangle$ or $\langle\rho_1,\rho_2\rangle$.
Throughout this section, whenever we describe the action of
$\rho_1$, we mean that $\sqrt{-1} \notin k$ and $\rho_1 \in \pi$;
similarly for $\rho_2$ and $\rho_3$.

As before, we may extend the actions of $G$ and $\pi$ to $k(\sqrt{-1},\sqrt{3})(x_1,x_2,x_3,x_4)$.

The actions of $\lambda_1$, $\lambda_2$, $\sigma$ on $x_1$, $x_2$, $x_3$, $x_4$ are the
same as given in Formula \eqref{4-1}.
The actions of $\lambda_3$ and $\lambda_4$ are given by
\begin{align*}
\lambda_3:{} & x_1\mapsto (-x_1-x_2-x_3+x_4)/2,~ x_2\mapsto (x_1-x_2+x_3+x_4)/2, \\
& x_3\mapsto (x_1-x_2-x_3-x_4)/2,~ x_4\mapsto (-x_1-x_2+x_3-x_4)/2, \\
\lambda_4:{} & x_1\mapsto x_1,~ x_2\leftrightarrow x_4,~ x_3\mapsto -x_3.
\end{align*}

Define $y_1$, $y_2$, $y_3$, $y_4$ by
\begin{align*}
\begin{pmatrix} y_1 \\ y_2 \\ y_3 \\ y_4 \end{pmatrix} &=
\begin{pmatrix}
 1 & 0 & \frac{(-1+\sqrt{-1})(1+\sqrt{3})}{2} & 0 \\
 0 & 1 & 0 & \frac{(-1+\sqrt{-1})(1+\sqrt{3})}{2} \\
 \frac{(2+\sqrt{-1}+\sqrt{3})}{2} & 0 & \frac{(-\sqrt{-1}+\sqrt{3})}{2} & 0 \\
 0 & \frac{(2+\sqrt{-1}+\sqrt{3})}{2} & 0 & \frac{(-\sqrt{-1}+\sqrt{3})}{2}
\end{pmatrix} \\
&\qquad \times\begin{pmatrix}
 1 & -\sqrt{-1} & 0 & 0 \\ 0 & 0 & 1 & -\sqrt{-1} \\
 0 & 0 & 1 & \sqrt{-1} \\ -1 & -\sqrt{-1} & 0 & 0
\end{pmatrix}
\begin{pmatrix} x_1 \\ x_2 \\ x_3 \\ x_4 \end{pmatrix}.
\end{align*}

By the same arguments as in Step 1 of Section 4, we can show that
$k(x_1,x_2,x_3,x_4)^G$ $=k(\sqrt{-1},\sqrt{3})(y_1,
y_2,y_3,y_4)^{\langle G, \pi \rangle}$ and
$k(x_1/x_4,x_2/x_4,x_3/x_4)^G=k(\sqrt{-1},\sqrt{3})(y_1/ y_2,
\break y_3/y_4,y_1/y_3)^{\langle G, \pi \rangle}$.

The actions of $\lambda_1$, $\lambda_2$, $\sigma$ on $y_1$, $y_2$,
$y_3$, $y_4$ are the same as those given in Formula \eqref{4-3}.
We describe the actions of $\lambda_3$, $\lambda_4$, $\rho_1$,
$\rho_2$ and $\rho_3$:
\begin{align*}
\lambda_3:{}& y_1 \mapsto \zeta y_1,~ y_2\mapsto \zeta y_2,~ y_3\mapsto \zeta^2 y_3,~
  y_4 \mapsto \zeta^2 y_4 \mbox{ ~ (where } \zeta=(-1+\sqrt{-3})/2), \\
\lambda_4:{} & y_1\mapsto (1-\sqrt{-1})(y_3+y_4)/2,~ y_2\mapsto (1-\sqrt{-1})(y_3-y_4)/2,   \\
& y_3\mapsto (1+\sqrt{-1})(y_1+y_2)/2,~ y_4\mapsto (1+\sqrt{-1})(y_1-y_2)/2, \\
\rho_1:{} & y_1\mapsto -(\sqrt{-1}+\sqrt{3})y_4/2,~ y_2\mapsto (\sqrt{-1}+\sqrt{3})y_3/2, \\
& y_3\mapsto (\sqrt{-1}+\sqrt{3})y_2/2,~ y_4\mapsto -(\sqrt{-1}+\sqrt{3})y_1/2, \\
\rho_2:{} & y_1\mapsto -\sqrt{-1}(2+\sqrt{-1}-\sqrt{3})y_3/2,~ y_2\mapsto -\sqrt{-1}(2+\sqrt{-1}-\sqrt{3})y_4/2, \\
& y_3\mapsto (2+\sqrt{-1}-\sqrt{3})y_1/2,~ y_4\mapsto
(2+\sqrt{-1}-\sqrt{3})y_2/2, \\
\rho_3:{} & y_1\mapsto (1+\sqrt{-1})(-1+\sqrt{3})y_2/2,~ y_2\mapsto (1+\sqrt{-1})(1-\sqrt{3})y_1/2, \\
& y_3\mapsto (1-\sqrt{-1})(1-\sqrt{3})y_4/2,~ y_4\mapsto
(-1+\sqrt{-1})(1-\sqrt{3})y_3/2.
\end{align*}

It remains to show that $k(\sqrt{-1},\sqrt{3})(y_1/ y_2,y_3/y_4,$
$y_1/y_3)^{\langle G, \pi \rangle}$ is rational over $k$.

\bigskip

Step 2. The substitution formulae for $z_1$, $z_2$, $z_3$, $u_1$,
$u_2$, $u_3$, $v_1$, $v_2$, $v_3$, $X_1$, $X_2$, $X_3$ are
completely the same as in Step 3 $\sim$ Step 5 of Section 4. Thus
we get $k(\sqrt{-1},\sqrt{3})(y_1/y_2, \break
y_3/y_4,y_1/y_3)^{\langle\lambda_1,\lambda_2,\sigma\rangle}
=k(\sqrt{-1},\sqrt{3})(X_1,X_2,X_3)$ and
$k(\sqrt{-1},\sqrt{3})(y_1,y_2,
y_3,y_4)^{\langle\lambda_1,\lambda_2,\sigma\rangle}
=k(\sqrt{-1},\sqrt{3})(X_1,X_2,X_3,z_0)$ where $z_0$ is fixed by
all elements of $G$ and $\pi$.

The action of $\lambda_3$ is given by
\[
\lambda_3: X_1\mapsto X_1,~ X_2\mapsto X_2,~ X_3\mapsto \zeta^2 X_3.
\]

Hence $k(\sqrt{-1},\sqrt{3})(X_1,X_2,X_3)^{\langle\lambda_3\rangle}=k(\sqrt{-1},\sqrt{3})(X_1,X_2,X_3^3)$.

We will discuss the rationality problem for the group $(4,33,7)$ in the next two steps.
The discussion for the group $(4,33,11)$ will be postponed till Step 5.

\bigskip

Step 3.
Consider the group $G=\langle \lambda_1,\lambda_2,\sigma,\lambda_3\rangle$ in this step.

We have shown that
$k(\sqrt{-1},\sqrt{3})(z_1,z_2,z_3)^{\langle\lambda_1,\lambda_2,\sigma,\lambda_3\rangle}
=k(\sqrt{-1},\sqrt{3})(X_1,X_2,X_3^3)$. If $\pi=\{1\}$, then
$k(x_1,x_2,x_3,x_4)^G$ is $k$-rational. It remains to consider
cases when $\pi=\langle\rho_1,\rho_2\rangle$,
$\langle\rho_1\rangle$, $\langle\rho_2\rangle$,
$\langle\rho_3\rangle$. We consider the case
$\pi=\langle\rho_1,\rho_2\rangle$ first.

The action of $\rho_1$ on $X_1$, $X_2$, $X_3$ (and on $X_3^3$ also) is given by
\begin{equation}
\rho_1: X_1\mapsto X_2/(X_1^2-X_1X_2+X_2^2),~ X_2\mapsto X_1/(X_1^2-X_1X_2+X_2^2),~X_3\mapsto -2A/X_3 \label{6-1}
\end{equation}
where $A=g_1g_2g_3^{-1}$ and $g_1$, $g_2$, $g_3$ are the same polynomials defined in Step 6 of Section 4.

Moreover, $\rho_2(X_1)=\rho_1(X_1)$, $\rho_2(X_2)=\rho_1(X_2)$, $\rho_2(X_3)=\sqrt{-1}\rho_1(X_3)$.

Define $Y_1=X_1/X_2$, $Y_2=X_1$, $Y_3=X_3^3/g_1^3$.

The action of $\rho_3$ is given as
\[
\rho_3: X_1\mapsto X_1,~X_2\mapsto X_2,~ X_3\mapsto -\sqrt{-1}
X_3,~ Y_1\mapsto Y_1,~ Y_2\mapsto Y_2,~ Y_3\mapsto \sqrt{-1} Y_3.
\]

We find that
\begin{equation}
k(\sqrt{-1},\sqrt{3})(Y_1,Y_2,Y_3)^{\langle\rho_3\rangle}=k(\sqrt{-3})(Y_1,Y_2,Y_3+\sqrt{-1}Y_3).
\label{6-2}
\end{equation}

For simplicity, call $Y_0=Y_3+\sqrt{-1} Y_3$.
The action of $\rho_1$ is given by
\begin{equation}
\rho_1:\sqrt{-3}\mapsto -\sqrt{-3},~ Y_1\mapsto 1/Y_1,~ Y_2\mapsto Y_1/(Y_2(1-Y_1+Y_1^2)),~ Y_0\mapsto 2B^3/Y_0
\label{6-3}
\end{equation}
where $B$ is defined in Step 6 of Section 4.

The remaining proof is similar to Step 7 of Section 4,
but the ground field is $k(\sqrt{-3})$ in the present situation.

It can be shown that $k(\sqrt{-3})(Y_1,Y_2,Y_0)^{\langle\rho_1\rangle}=k(U_0,U_1,U_2,U_3,U_4)$ where
\begin{align*}
U_0 &= \sqrt{-3}(1-Y_1)/(1+Y_1), ~~ U_1=Y_2+Y_1/(Y_2(1-Y_1+Y_1^2)), \\
U_2 &= \sqrt{-3}[Y_2-Y_1/(Y_2(1-Y_1+Y_1^2))],~~ U_3=Y_0+2B^3/Y_0,~~ U_4=\sqrt{-1}(Y_0-2B^3/Y_0),
\end{align*}
with two relations
\begin{equation}
3U_1^2+U_2^2 = 4(3+U_0^2)/(1-U_0^2), ~~ 3U_3^2+U_4^2=24B^3.
\label{6-4}
\end{equation}

We will simplify the relations in \eqref{6-4}.
We use the identity $(3a^2+b^2)(3c^2+d^2)=3(ad+bc)^2+(bd-3ac)^2$ this time.
Define
\[
V_1=(U_0U_1+U_2)(1-U_0^2)/(6+2U_0^2),~~ V_2=(U_0U_2-3U_1)(1-U_0^2)/(6+2U_0^2).
\]

The first relation becomes
\begin{equation}
3V_1^2+V_2^2=1-U_0^2. \label{6-5}
\end{equation}

Hence we get
\[
3[V_1/(1+V_2)]^2+[U_0/(1+V_2)]^2=(1-V_2)/(1+V_2).
\]

It is not difficult to show that $k(U_0,U_1,U_2)=k(W_1,W_2)$ where $W_1=V_1/(1+V_2)$, $W_2=U_0/(1+V_2)$,
because of Formula \eqref{6-5}.

Now we will simplify the second relation $3U_3^2+U_4^2=24B^3$ in
\eqref{6-1}.

Express $B^3$ in terms of $W_1$, $W_2$.
This relation becomes
\begin{equation}
3U_3^2+U_4^2=3/D^3 \label{6-6}
\end{equation}
where
$D=2[9W_1^2-W_2^2-3(3W_1^2+W_2^2)^2]/[3(3W_1^2+(1-W_2)^2)(3W_1^2+(1+W_2)^2)]$.

Regarding
$(3W_1^2+(1-W_2)^2)(3W_1^2+(1+W_2)^2)=(3a^2+b^2)(3c^2+d^2)$ and
using the identity $(3a^2+b^2)(3c^2+d^2)=3(ad+bc)^2+(bd-3ac)^2$,
we may change the variables by defining
\[
W_3=D^2[2W_1U_4+(3W_1^2+W_2^2-1)U_3],~~ W_4=D^2[6W_1U_3-(3W_1^2+W_2^2-1)U_4].
\]

We get $k(W_1,W_2,U_3,U_4)=k(W_1,W_2,W_3,W_4)$ with a relation
\[
3W_3^2+W_4^2=2[9W_1^2-W_2^2-3(3W_1^2+W_2^2)^2]
\]

Define $w_i=W_i/(3W_1^2+W_2^2)$ for $1\le i\le 4$.
We get $k(W_1,W_2,W_3,W_4)=k(w_1,w_2,\break w_3,w_4)$ with a relation
\begin{equation}
3w_3^2+w_4^2-2(9w_1^2-w_2^2)+6=0. \label{6-7}
\end{equation}

Hence $k(w_1,w_2,w_3,w_4)=k(3w_1-w_2,w_3,w_4)$ is rational over $k$,
because of Formula \eqref{6-7}. Done.

\bigskip

Step 4. Suppose $\pi=\langle\rho_1\rangle$, $\langle\rho_2\rangle$
or $\langle\rho_3\rangle$ for the group $(4,33,7)$.

If $\pi=\langle\rho_3\rangle$, i.e.\ $\sqrt{-3}\in k$, the
rationality was already proved in Formula \eqref{6-2} of the
previous step.

If $\pi=\langle\rho_1\rangle$, i.e.\ $\sqrt{3}\in k$, we get
$k(\sqrt{-1},\sqrt{3})(Y_1,Y_2,Y_3)^\pi=k(\sqrt{-3})(Y_1,Y_2,Y_0)^{\langle\rho_1\rangle}$.
The proof is the same as in Step 3.

The case $\pi=\langle\rho_2\rangle$, i.e.\ $\sqrt{-1}\in k$, is
similar and the proof is omitted.

\bigskip

Step 5.
Now we consider the group $G=\langle\lambda_1,\lambda_2,\sigma,\lambda_3,\lambda_4\rangle$,
i.e.\ the group $(4,33,11)$.
In Step 2, we have shown that $k(\sqrt{-1},\sqrt{3})(z_1,z_2,z_3)^{\langle\lambda_1,\lambda_2,\sigma,\lambda_3\rangle}
=k(\sqrt{-1},\sqrt{3})(X_1,\break X_2,X_3^3)$.
We will show that $k(\sqrt{-1},\sqrt{3})(X_1,X_2,X_3^3)^{\langle\lambda_4,\pi\rangle}$ is rational over $k$.

We modify the definition of $Y_1$, $Y_2$, $Y_3$ in Step 4.
Now define
\[
Y_1=X_1,~~ Y_2=X_2,~~ Y_3=(1+\sqrt{-1})X_3^3/(1+2X_1-X_2+X_1^2-X_1X_2+X_2^2)^3.
\]

The actions of $\lambda_4$, $\rho_1$, $\rho_2$, $\rho_3$ on
$\bm{Q}(\sqrt{-1},\sqrt{3})(Y_1,Y_2,Y_3)$ are given by
\begin{equation}
\begin{aligned}
\lambda_4: Y_1 &\leftrightarrow Y_2,~ Y_3\mapsto -8/(Y_3D^3), \\
\rho_1:Y_1 &\mapsto Y_2/(Y_1^2-Y_1Y_2+Y_2^2), ~ Y_2\mapsto Y_1/(Y_1^2-Y_1Y_2+Y_2^2), \\
Y_3 &\mapsto -8(Y_1^2-Y_1Y_2+Y_2^2)^3/(Y_3D^3), \\ \rho_2:Y_1 &\mapsto Y_2/(Y_1^2-Y_1Y_2+Y_2^2), ~ Y_2\mapsto Y_1/(Y_1^2-Y_1Y_2+Y_2^2), \\
Y_3 &\mapsto -8(Y_1^2-Y_1Y_2+Y_2^2)^3/(Y_3D^3), \\
\rho_3: Y_1 &\mapsto Y_1,~ Y_2\mapsto Y_2,~ Y_3\mapsto Y_3.
\end{aligned} \label{6-8}
\end{equation}
where
$D=1+Y_1+Y_2+2Y_1Y_2+Y_1^3+Y_2^3+Y_1^4-2Y_1^3Y_2+3Y_1^2Y_2^2-2Y_1Y_2^3+Y_2^4$.

Define
\begin{alignat*}{2}
U_0 &= Y_1/Y_2, &~~ U_1 &= Y_1[1+(1/(Y_1^2-Y_1Y_2+Y_2^2))], \\
U_2 &= \sqrt{-3} Y_1[1-(1/(Y_1^2-Y_1Y_2+Y_2^2))], & U_3 &=
Y_3[1+(Y_1^2-Y_1Y_2+Y_2^2)^3].
\end{alignat*}

It is easy to verify that $k(\sqrt{-1},\sqrt{3})(Y_1,Y_2,Y_3)=k(\sqrt{-1},\sqrt{3})(U_0,U_1,U_2,U_3)$ with a relation
\begin{equation}
3U_1^2+U_2^2=12U_0^2/(1-U_0+U_0^2). \label{6-9}
\end{equation}

Define
\[
V_0=2U_0-1,~~V_1=U_1(1-U_0+U_0^2)/U_0,~~V_2=U_2(1-U_0+U_0^2)/(3U_0).
\]

Then $k(\sqrt{-1},\sqrt{3})(U_0,U_1,U_2)=k(\sqrt{-1},\sqrt{3})(V_0,V_1,V_2)$ and the relation \eqref{6-9} becomes
\[
V_0^2-V_1^2-3V_2^2+3=0.
\]

Since $V_0-V_1\in k(\sqrt{-1},\sqrt{3})(V_0+V_1,V_2)$ by the above relation,
we find that $k(\sqrt{-1},\sqrt{3})(V_0,V_1,V_2)=k(\sqrt{-1},\sqrt{3})(V_0+V_1,V_2)$.

Now define
\[
w_1=V_0+V_1,~~ w_2=V_2,~~ w_3=U_3.
\]

Then
$k(\sqrt{-1},\sqrt{3})(Y_1,Y_2,Y_3)=k(\sqrt{-1},\sqrt{3})(U_0,U_1,U_2,U_3)=k(\sqrt{-1},\sqrt{3})(w_1,w_2,w_3)$
and the actions of $\lambda_4, \rho_1, \rho_2, \rho_3$ on $w_1$,
$w_2$, $w_3$ are given by
\begin{equation}
\begin{aligned}
\lambda_4: w_1&\mapsto f_1f_2f_3^{-1},~ w_2\mapsto
4w_1w_2f_3^{-1},~w_3\mapsto -h_1^2h_2^2h_3^2/(w_3w_1^3h_4^3), \\
\rho_1: w_1&\mapsto f_1f_2f_3^{-1},~ w_2\mapsto
4w_1w_2f_3^{-1},~w_3\mapsto -h_1^2h_2^2h_3^2/(w_3w_1^3h_4^3), \\
\rho_2: w_1&\mapsto f_1f_2f_3^{-1},~ w_2\mapsto
4w_1w_2f_3^{-1},~w_3\mapsto -h_1^2h_2^2h_3^2/(w_3w_1^3h_4^3), \\
\rho_3: w_1&\mapsto w_1,~ w_2\mapsto w_2,~ w_3\mapsto w_3,~
\end{aligned}
\end{equation}
where $f_1=3+w_1-3w_2$, $f_2=3+w_1+3w_2$,
$f_3=-3+2w_1+w_1^2+3w_2^2$, $h_1=3+w_1^2-3w_2^2$,
$h_2=3+w_1^2-6w_1w_2-3w_2^2$, $h_3=3+w_1^2+6w_1w_2-3w_2^2$,
$h_4=3+7w_1+5w_1^2+w_1^3-3w_2^2-9w_1w_2^2$.

\bigskip

Step 6. We claim that
$k(\sqrt{-1},\sqrt{3})(w_1,w_2,w_3)^{\langle\lambda_4,
\pi\rangle}=k(w_1,w_2,w_3)^{\langle\lambda_4\rangle}$ no matter
what $\pi$ is. For example, suppose $\pi=\langle\rho_1\rangle$. By
Formula (6.10), the actions of $\lambda_4$ and $\rho_1$ on
$w_1,w_2,w_3$ are the same. Thus
$k(\sqrt{-1},\sqrt{3})(w_1,w_2,w_3)^{\langle\lambda_4,
\rho_1\rangle}=\{ k(\sqrt{-1},\sqrt{3})(w_1, \break
w_2,w_3)^{\langle\lambda_4\rho_1\rangle} \}^{\langle\lambda_4
\rangle}=k(w_1,w_2,w_3)^{\langle\lambda_4\rangle}$. Similarly the
other cases can be verified easily.

\bigskip

Step 7. We will prove that
$k(w_1,w_2,w_3)^{\langle\lambda_4\rangle}$ is $k$-rational.

Note that $\lambda_4(k(w_1,w_2))=k(w_1,w_2)$. We will find an
element $t\in k(w_1,w_2)$ such that $\lambda_4(t)=t$. But we
simplify $w_1$, $w_2$, $w_3$ first.

Define $W_1=w_1+1$, $W_2=w_2$, $W_3=h_1h_2h_3/(w_3w_1^2h_4)$.
Then $k(w_1,w_2,w_3)=k(W_1,W_2,W_3)$ and $\lambda_4$ acts on $W_1$, $W_2$, $W_3$ by
\begin{align*}
\lambda_4:{} & W_1 \mapsto (4W_1+2W_1^2-6W_2^2)/(-4+W_1^2+3W_2^2), \\
& W_2\mapsto(4W_1W_2-4W_2)/(-4+W_1^2+3W_2^2), \\
& W_3\mapsto -4(2W_1^2+W_1^3-9W_1W_2^2+6W_2^2)/[W_3(-4+W_1^2+3W_2^2)].
\end{align*}

We will find an element $t\in k(w_1,w_2)=k(W_1,W_2)$ such that
$\lambda_4(t)=t$. We use a similar trick in \cite[Section
2]{HoK1}: Examine the effects of $\lambda_4$ on $w_1$, $w_2$,
$f_1$, $f_2$, $f_3$. The action $\lambda_4$ acts on these five
polynomials by ``monomial automorphisms" (but these five
polynomial are not algebraically independent). It is not difficult
to find a monomial fixed by $\lambda_4$; for example,
$\lambda_4(f_1/f_2)=f_1/f_2$. Define
\[
t_1=f_1/f_2=(2+W_1-3W_2)/(2+W_1+3W_2).
\]

Then $k(W_1,W_2)=k(t_1,W_1)$ and $\lambda_4(t_1)=t_1$,
$\lambda_4(W_1)=[(-1+2t_1-t_1^2)+W_1(1+4t_1+t_1^2)]/[(-1-4t_1-t_1^2)+W_1(1+t_1+t_1^2)]$.
Note that $\lambda_4(W_1)$ is of the form $(a_1+a_2W_1)/(a_3+a_4W_1)$ where $a_1,a_2,a_3,a_4\in k(t_1)$.
Thus we can bring it into the form $a_0/W'$ for some $W'$ and some $a_0\in k(t_1)$.

Define $E=t_1/(1+t_1+t_1^2)$, $t_2=W_1-3E-1$, $t_3=W_3$.

We find that $k(W_1,W_2,W_3)=k(t_1,t_2,t_3)$ and
\[
\lambda_4: t_1\mapsto t_1,~ t_2\mapsto a/t_2,~ t_3\mapsto [c(t_2+(a/t_2))+d]/t_3
\]
where $a=9E(1+E)$, $c=-12E$, $d=4(1-9E-18E^2)$.
Thus we apply Theorem \ref{t2.3} to conclude that $k(t_1,t_2,t_3)^{\langle\lambda_4\rangle}$ is rational over $k$.
 \qed

\section*{Appendix}

In the first paragraph of Section 3.3 in \cite[page 368]{KY},
the proof of the following theorem is referred to \cite{Ya1},
to which it is not easy to get access.
And therefore we include a proof of this theorem.

\renewcommand{\thetheorem}{A\arabic{theorem}}
\begin{theorem} \label{tA1}
Let $G$ be one of the five groups $(4,31,3)$, $(4,31,4)$, $(4,31,5)$, $(4,31,6)$, $(4,31,7)$ in $GL_4(\bm{Q})$.
Then $\bm{Q}(x_1,x_2,x_3,x_4)^G$ is rational over $\bm{Q}$.
\end{theorem}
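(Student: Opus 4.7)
The plan is to prove $\bm{Q}$-rationality for each of the five groups by a uniform two-stage strategy. First, for each $G$, I would write down the explicit matrix generators of $G$ from \cite[pages 80--260]{BBNWZ} and look for a normal subgroup $N \triangleleft G$ whose image in $GL_4(\bm{Q})$, after a suitable $\bm{Q}$-change of basis, acts by monomial $\bm{Q}$-automorphisms on $\bm{Q}(x_1,x_2,x_3,x_4)$. Since the five groups all sit in the same crystal system and therefore share a common lattice, the same $N$ (or a minor variant of it) should serve as the first-stage subgroup uniformly across all five cases.

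For the first stage, I would compute $\bm{Q}(x_1,x_2,x_3,x_4)^N$ using the tools for monomial actions. Theorem \ref{t2.6} (Hajja) handles two-variable monomial sub-actions completely, and Theorem \ref{t2.1} (Ahmad--Hajja--Kang) allows iterative reduction whenever some generator acts affinely in one variable with coefficients in the field generated by the others. The output is an explicit set of generators $y_1,y_2,y_3,y_4$ with $\bm{Q}(x_1,x_2,x_3,x_4)^N = \bm{Q}(y_1,y_2,y_3,y_4)$, together with a concrete formula for the induced action of $G/N$ on the $y_i$.

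For the second stage, I would analyze the action of the finite quotient $G/N$ on $\bm{Q}(y_1,y_2,y_3,y_4)$. Since $|G/N|$ is expected to be small in crystal system $31$ (at worst a cyclic or dihedral group of order $\le 6$), the induced action should fit into one of the canonical forms covered by Section 2: hyperbolic involutions $y \mapsto a/y$ covered by Theorems \ref{t2.3} and \ref{t2.4}, or the $3$-cycle treated by Theorem \ref{t2.5} (Masuda). Whenever a $G/N$-stable affine subspace can be identified, Theorem \ref{t2.2} (Hajja--Kang) will allow direct descent to the invariants. Chaining these reductions together (possibly twice for the larger groups $(4,31,5), (4,31,6), (4,31,7)$) should yield four algebraically independent $G$-invariants that generate $\bm{Q}(x_1,x_2,x_3,x_4)^G$ over $\bm{Q}$.

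The main obstacle will be choosing the correct change of variables at each stage so that the residual action fits a standard form on the nose. For the groups of larger index $[G:N]$, the quotient action on $\bm{Q}(y_1,\ldots,y_4)$ typically will not be of one of the canonical shapes immediately; the remedy is to introduce an auxiliary $G/N$-invariant (a suitable quotient $y_i/y_j$, or a symmetric combination $y_i+y_j$) which both generates part of the fixed field and simultaneously linearizes or hyperbolizes the remaining action. This mirrors the technique used in Sections 4--6 of the paper, but should be considerably simpler, since --- unlike crystal system $33$ --- crystal system $31$ is defined over $\bm{Q}$ without needing to adjoin $\sqrt{-1}$, $\sqrt{2}$, or $\sqrt{-3}$, so no descent through a conic bundle should be necessary.
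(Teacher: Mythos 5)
There is a genuine gap, and it occurs at the very first step: your strategy requires a normal subgroup $N \triangleleft G$ acting monomially after a $\bm{Q}$-change of basis, but the five groups in crystal system $31$ are isomorphic to $A_5$, $S_5$, $A_5\times C_2$ and $S_5\times C_2$ as abstract groups. For $(4,31,3)\cong A_5$ there is no candidate $N$ at all: $A_5$ is simple, so you would need $N=G$ itself to be monomial, but $A_5$ admits no faithful monomial representation of degree $4$ (a monomial matrix group of degree $4$ maps onto a subgroup of $S_4$ permuting the coordinate lines, and since $A_5$ is simple of order $60$ any homomorphism $A_5\to S_4$ is trivial, forcing the representation to be diagonal, hence abelian image --- absurd). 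For $(4,31,4)\cong S_5$ the only proper nontrivial normal subgroup is $A_5$, which throws the entire difficulty into the non-monomial $A_5$-stage; and the same obstruction propagates to the $\pm I$-extensions $(4,31,6)$ and $(4,31,7)$. So your expectation that $|G/N|\le 6$ with $N$ monomial fails in all five cases, and the monomial-tower reduction (Theorems \ref{t2.6}, \ref{t2.1} in stage one, Theorems \ref{t2.3}--\ref{t2.5} in stage two) never gets off the ground; these theorems are the right toolkit for the solvable groups of crystal system $33$, not for groups containing a nonabelian simple group.

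The paper's actual proof is representation-theoretic rather than reductive. Using the character tables, it identifies $(4,31,4)$ with the standard $4$-dimensional representation $V_0$ of $S_5$, $(4,31,5)$ with its tensor by the sign character \cite[pages 27--29]{FH}, $(4,31,3)$ with the restriction of $V_0$ to $A_5$, and $(4,31,6)$, $(4,31,7)$ with the corresponding products with $\langle -I\rangle$. The one ingredient you share with the paper is Theorem \ref{t2.1}: it gives $\bm{Q}(x_1,x_2,x_3,x_4)^G=\bm{Q}(x_1/x_4,x_2/x_4,x_3/x_4)^G(x_0)$, and the decisive observation is that on the projective variables $x_i/x_4$ both the central element $-I$ and the sign twist act trivially, so all five cases collapse to the action of $S_5$ or $A_5$ on the function field of $\bm{P}(V_0)$. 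The rationality of $\bm{Q}(x_1/x_4,x_2/x_4,x_3/x_4)^{S_5}$ and $\bm{Q}(x_1/x_4,x_2/x_4,x_3/x_4)^{A_5}$ is then quoted from Hajja--Kang \cite[Lemma 1, Lemma 5]{HK}; this external input (or an equivalent explicit computation of the degree-three invariant field of the projectivized standard representation, which is itself nontrivial) is the essential idea missing from your proposal, and no amount of clever changes of variables within your two-stage scheme will substitute for it.
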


\begin{proof}
By \cite[378]{KY}, these groups are isomorphic to $A_5$, $S_5$,
$A_5\times C_2$ or $S_5\times C_2$ as abstract groups where $A_5$
and $S_5$ are the alternating and symmetric group of degree five.
Since $S_5$ has two inequivalent irreducible representations of
dimension four arising from the standard representation and its
tensor product with the unique non-trivial linear character
\cite[pages 27--29]{FH}, it is easy to find these five groups as
follows.

Define $\sigma, \tau_1, \tau_2, \tau_3, \lambda\in GL_4(\bm{Q})$ by
\begin{align*}
\sigma &= \begin{pmatrix} 0 & 0 & 0 & -1 \\ 1 & 0 & 0 & -1 \\ 0 & 1 & 0 & -1 \\ 0 & 0 & 1 & -1 \end{pmatrix},
~~ \tau_1=\left(\hs{-1}\begin{array}{cc@{\vds}cc}
0 & 1 & & \\ 1 & 0 & & \\[-3\jot] \hds{4} \\[-1\jot] & & 1 & 0 \\ & & 0 & 1 \end{array}\hs{-1}\right),
~~ \tau_2=\left(\hs{-1}\begin{array}{cc@{\vds}cc}
0 & -1 & & \\ -1 & 0 & & \\[-3\jot] \hds{4} \\[-1\jot] & & -1 & 0 \\ & & 0 & -1 \end{array}\hs{-1}\right), \\
\tau_3 &= \left(\hs{-1}\begin{array}{ccc@{\vds}c}
0 & 0 & 1 & \\ 1 & 0 & 0 & \\ 0 & 1 & 0 \\[-3\jot] \hds{4} \\[-1\jot] & & & 1 \end{array}\hs{-1}\right),
~~ \lambda=\begin{pmatrix} -1 & & & \\ & -1 & & \\ & & -1 & \\ & & & -1 \end{pmatrix}.
\end{align*}

Note that the matrix $\sigma$ corresponds to the $5$-cycle
$(1,2,3,4,5)\in S_5$, the matrix $\tau_1$ corresponds to the
transposition $(1,2)\in S_5$, the matrix $\tau_3$ corresponds to
the $3$-cycle $(1,2,3)\in S_5$.

By comparing the character tables of these groups, we find that
the group $(4,31,3)$ is conjugate to the group
$\langle\sigma,\tau_3\rangle$ (which is the restriction of the
standard representation to $A_5$), the group $(4,31,4)$ conjugate
to the group $\langle\sigma,\tau_1\rangle$ (which is the standard
representation of $S_5$), the group $(4,31,5)$ conjugate to the
group $\langle\sigma,\tau_2\rangle$, the group
 $(4,31,6)$ conjugate to the group $\langle\sigma,\tau_3,\lambda\rangle$, and the group $(4,31,7)$
conjugate to the group $\langle\sigma,\tau_1,\lambda\rangle$.

Suppose $G$ is any one of the above five groups acting on $\bm{Q}(x_1,x_2,x_3,x_4)$.
By Theorem \ref{t2.1}, we find that $\bm{Q}(x_1,x_2,x_3,x_4)^G=\bm{Q}(x_1/x_4,x_2/x_4,x_3/x_4)^G(x_0)$
where $x_0$ is fixed by all elements of $G$.
Note that $\bm{Q}(x_1/x_4,x_2/x_4,x_3/x_4)$ is the function field of $\bm{P}(V_0)$ where $V_0$ is the
standard representation of $S_5$ (see \cite[p.519]{HK}).
By \cite[Lemma 1, Lemma 5]{HK}, both $\bm{Q}(x_1/x_4,x_2/x_4,x_3/x_4)^{S_5}$ and
$\bm{Q}(x_1/x_4,x_2/x_4,x_3/x_4)^{A_5}$ are $\bm{Q}$-rational.
\end{proof}

The following theorem was due to Oura and Rikuna \cite{OR}.
It seems that \cite{OR} has not been published in some journal.
Thus we include its proof here.

\begin{theorem} \label{tA2}
Let $G$ be a finite subgroup of $GL_3(\bm{Q})$.
Then $\bm{Q}(x_1,x_2,x_3)^G$ is rational over $\bm{Q}$.
\end{theorem}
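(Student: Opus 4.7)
The approach mirrors that of Theorem \ref{tA1}: I would classify the finite subgroups of $GL_3(\bm{Q})$ up to conjugacy and verify the rationality of $\bm{Q}(x_1,x_2,x_3)^G$ for each class using the tools of Section 2. The classification is finite and explicit (the three-dimensional portion of \cite{BBNWZ}), and any finite $G\subset GL_3(\bm{Q})$ is conjugate to a subgroup of $GL_3(\bm{Z})$, so only a small number of $\bm{Q}$-classes need to be checked.

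The first step is a descent to dimension two. Set $y_1=x_1/x_3$, $y_2=x_2/x_3$ and write $\bm{Q}(x_1,x_2,x_3)=\bm{Q}(y_1,y_2)(x_3)$. Since $G$ acts linearly, the subfield $L=\bm{Q}(y_1,y_2)$ of degree-zero rational functions is $G$-stable, and for each $\sigma=(a_{ij})\in G$ we have $\sigma(x_3)=a_\sigma\cdot x_3$ with $a_\sigma = a_{13}y_1+a_{23}y_2+a_{33}\in L\setminus\{0\}$. Theorem \ref{t2.1} then yields $\bm{Q}(x_1,x_2,x_3)^G = L^G(f)$ for some $f$, so it suffices to show that $\bm{Q}(y_1,y_2)^G$ is $\bm{Q}$-rational for every such $G$.

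For the 2-variable problem, I would analyze the induced projective action $\bar G\subset PGL_3(\bm{Q})$ on $\mathbb{P}^2_{\bm{Q}}$ and treat the conjugacy classes case by case. Most fall into easy categories: if $\bar G$ preserves a pencil of lines through a rational point, apply Theorem \ref{t2.1} a second time to reduce to a 1-variable rationality problem, which is trivial; if $\bar G$ is conjugate to a monomial subgroup of $PGL_3(\bm{Q})$, Theorem \ref{t2.6} (Hajja) applies directly; if $\bar G$ contains a 3-cycle permuting the three homogeneous coordinates, Theorem \ref{t2.5} (Masuda) applies. Central extensions by $\langle\pm I\rangle$ introduce at most one quadratic relation that can be absorbed by a change of variables.

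The main obstacle is the handful of irreducible large cases, in particular when $G$ acts via the standard 3-dimensional representation of $S_4$ (or its restriction to $A_4$) or its twist by the sign character. Here I would realize the standard representation as the quotient of the permutation representation on $\bm{Q}^4$ by the diagonal, identifying $\bm{Q}(x_1,x_2,x_3)^{S_4}$ with $\bm{Q}(u_1/u_4,u_2/u_4,u_3/u_4)^{S_4}$ under the natural $S_4$-action on $\bm{Q}(u_1,u_2,u_3,u_4)$. The rationality of this latter field over $\bm{Q}$ then follows from Hajja-Kang \cite{HK}, exactly as used in the proof of Theorem \ref{tA1}. The twisted and extended versions (sign-character twist and central $C_2$ extensions) should then be handled by a short additional computation combining Theorems \ref{t2.1}, \ref{t2.2} and \ref{t2.6}.
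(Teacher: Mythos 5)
Your proposal is correct in outline and its skeleton coincides with the paper's: reduce via Theorem \ref{t2.1} (with $L=\bm{Q}(x_1/x_3,x_2/x_3)$) to a two-dimensional problem, then run through the finitely many $\bm{Q}$-classes from \cite{BBNWZ}. The one real divergence is your treatment of the irreducible cases. The paper's proof rests on a structural fact you did not use: of the 32 conjugacy classes in $GL_3(\bm{Q})$, 22 are reducible and the remaining 10 are $M$-groups, i.e.\ conjugate to groups of monomial matrices. In particular the standard representation of $S_4$, its twist by the sign character, and their restrictions to $A_4$ --- the cases you single out as the ``main obstacle'' --- are in fact monomial: they are realized by signed permutation matrices (the symmetry groups of the tetrahedron and the cube), so the induced action on $x_1/x_3$, $x_2/x_3$ is by monomial automorphisms and Theorem \ref{t2.6} disposes of them at once, exactly as for the other monomial classes. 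The whole proof therefore needs only two uniform arguments: reducible classes via two applications of Theorem \ref{t2.1} plus L\"uroth, and monomial classes via Theorem \ref{t2.1} plus Theorem \ref{t2.6}. Your detour through the permutation representation is nevertheless sound: the chart $u_1+u_2+u_3+u_4=1$ identifies the complement of a hyperplane in $\bm{P}(\bm{Q}^4)$ equivariantly with the standard representation, so $\bm{Q}(u_1/u_4,u_2/u_4,u_3/u_4)^{S_4}\cong\bm{Q}(x_1,x_2,x_3)^{S_4}$, and results of the type in \cite{HK} cover it --- this is precisely the mechanism the paper deploys one dimension up in Theorem \ref{tA1}, where the standard representation of $S_5$ genuinely is not monomial; in dimension three it buys nothing that Hajja's theorem does not already give. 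Two small corrections: Masuda's Theorem \ref{t2.5} is subsumed by the monomial case and is not needed; and central scalars $\pm I$ act trivially on $x_1/x_3$, $x_2/x_3$, so they simply disappear after the descent --- no quadratic relation arises and no absorption argument is required.
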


\begin{proof}
Step 1. We look into the book \cite{BBNWZ}. There are 32 finite
subgroups in $GL_3(\bm{Q})$ up to conjugation. Among them, there
are 22 subgroups in total, which are reducible (i.e.\ suppose $G$
acts on $\bigoplus_{1\le i\le 3} \bm{Q}\cdot x_i$; then without
loss of generality we may assume that $\sigma\cdot x_1,
\sigma\cdot x_2\in\bm{Q}\cdot x_1\oplus \bm{Q}\cdot x_2$ and
$\sigma\cdot x_3\in\bm{Q} \cdot x_3$ for any $\sigma\in G$). The
remaining 10 subgroups are $M$-groups in the sense that, if $G$
acts on $\bigoplus_{1\le i\le 3}\bm{Q}\cdot x_i$, then
$\sigma\cdot x_i\in\bm{Q}\cdot x_{\sigma(i)}$ for all $\sigma\in
G$, for $1\le i\le 3$ (see \cite[page 262]{CR} for details). The
following 10 groups are $M$-groups: $(3,5,i)$, $(3,7,i)$ where
$1\le i\le 5$. The remaining 22 groups are reducible groups. Be
aware that there are more $M$-groups other than the 10 groups
listed above. But these ``extra" groups are reducible groups also.

\bigskip

Step 2.
Suppose $G$ is a reducible group.
We may assume that, for all $\sigma\in G$,
$\sigma\cdot x_1, \sigma\cdot x_2\in\bm{Q}\cdot x_1\oplus \bm{Q}\cdot x_2$, $\sigma\cdot x_3\in \bm{Q}\cdot x_3$.
Thus $\bm{Q}(x_1,x_2,x_3)^G=\bm{Q}(x_1,x_2)^G(x_0)$ by Theorem \ref{t2.1}.
Now $\bm{Q}(x_1,x_2)^G=\bm{Q}(x_1/x_2,x_2)^G=\bm{Q}(x_1/x_2)^G(y_0)$ by Theorem \ref{t2.1} again.
Since $\bm{Q}(x_1/x_2)^G$ is $\bm{Q}$-rational by L\"uroth's Theorem,
we find that $\bm{Q}(x_1,x_2,x_3)^G$ is $\bm{Q}$-rational.

\bigskip

Step 3. Suppose that $G$ is a $M$-group acting on
$\bm{Q}(x_1,x_2,x_3)$. Then
$\bm{Q}(x_1,x_2,x_3)^G=\bm{Q}(x_1/x_3,x_2/x_3,x_3)^G=\bm{Q}(x_1/x_3,x_2/x_3)^G(x_0)$
by Theorem \ref{t2.1}. The action of $G$ on
$\bm{Q}(x_1/x_3,x_2/x_3)$ are monomial actions in $x_1/x_3$,
$x_2/x_3$ (see the last paragraph of Section 1 for the definition
of monomial automorphisms). By Theorem \ref{t2.6},
$\bm{Q}(x_1/x_3,x_2/x_3)^G$ is rational over $\bm{Q}$.
\end{proof}

\bigskip

\bigskip

\bigskip

\renewcommand{\refname}{\centering{References}}

\end{document}